\documentclass[11pt, reqno]{amsart}

\usepackage{amssymb}
\usepackage{graphicx}
\usepackage{amsfonts}
\usepackage{amsthm}
\usepackage{amsmath}
\usepackage{empheq}
\usepackage{algorithm}
\usepackage{algorithmic}
\usepackage[T1]{fontenc}
\usepackage[usenames,svgnames,dvipsnames]{xcolor}
\usepackage{bbm}

\usepackage{subcaption}
\usepackage{epsfig}
\usepackage{epstopdf}

\usepackage[bookmarksnumbered=true, citecolor=blue, colorlinks=true]{hyperref}
\usepackage[nameinlink]{cleveref}
\crefname{equation}{}{} 

\usepackage[numbers,sort&compress]{natbib}
\usepackage{todonotes}
\usepackage{mathtools}




\theoremstyle{plain}
\newtheorem{theorem}{Theorem}[section]
\newtheorem{lemma}[theorem]{Lemma}

\newtheorem{assumption}[theorem]{Assumption}
\theoremstyle{definition}

\theoremstyle{remark}

\numberwithin{equation}{section}




\newcommand{\bN}{\mathbb N}
\newcommand{\bR}{\mathbb R}

\newcommand{\Dl}{{D_{\lambda}}}

\newcommand{\DlR}{{D_{\lambda,R}}}
\newcommand{\DlRc}{{V\backslash D_{\lambda,R}}}

\newcommand{\Gl}{{W_{\lambda, R}}}

\newcommand{\cG}{\mathcal{G}}
\newcommand{\cS}{\mathcal{S}}



\newcommand{\veta}{\boldsymbol \eta}
\newcommand{\vtheta}{\boldsymbol \theta}

\newcommand{\mtrx}[1]{\mathsf{#1}}

\newcommand{\mB}{{\bf B}}

\newcommand{\mG}{{\bf G}}

\newcommand{\mI}{{\bf I}}

\newcommand{\mzero}{\mtrx 0}

\newcommand{\st}{\mbox{ s.t. }}


\begin{document}

\title[Exponential Convergence of Divide-and-Conquer Algorithm]{Exponential convergence of a distributed divide-and-conquer algorithm for constrained convex optimization on networks}

\author{Nazar Emirov}
\address  
{Department of Mathematics, University of Central Florida, Orlando, Florida 32816}
\email{nazaremirov@gmail.com}

\author{Guohui Song}
\address{Department of Mathematics and Statistics, Old Dominion University, Norfolk, Virginia 23529}
\email{gsong@odu.edu}

\author{Qiyu Sun}
\address{Department of Mathematics, University of Central Florida, Orlando, Florida 32816}
\email{qiyu.sun@ucf.edu}

\thanks{The project is partially supported by the National Science Foundation DMS-1816313 and DMS-2318781.}

\dedicatory{Dedicated to Professor David Royal Larson}


\begin{abstract}
We propose a divide-and-conquer (DAC) algorithm for constrained convex optimization over networks, where the global objective is the sum of local objectives attached to individual agents. The algorithm is fully distributed: each iteration solves local subproblems around selected fusion centers and coordinates only with neighboring fusion centers. 
Under standard assumptions of smoothness, strong convexity, and locality on the objective function, together with polynomial growth conditions on the underlying graph, we establish exponential convergence of the DAC iterations and derive explicit bounds for both exact and inexact local solvers.
Numerical experiments on three representative losses ($L_2$ distance, quadratic, and entropy) confirm the theory and demonstrate scalability and effectiveness.
\end{abstract}

\maketitle

\section{Introduction}

In this paper, we address the solution of the following constrained convex optimization problem on a network described by a simple graph ${\mathcal G}=(V, E)$ of large order $N$, 
\begin{subequations} \label{convexoptimizationgeneral.def}
  \begin{equation}\label{convexoptimizationgeneral.def1}
    \min_{{\bf x}\in  {\mathbb R}^N} F({\bf x}):=\sum_{i\in V}f_i({\bf x}),
  \end{equation}
subject to linear equality constraints on vertices in   $W\subseteq V$,
\begin{equation} \label{convexoptimizationgeneral.def2}
    {\bf A} {\bf x} = {\bf b},
  \end{equation}
  and convex inequality constraints on vertices in $U\subseteq V$,
  \begin{equation} \label{convexoptimizationgeneral.def3}
    g_l({\bf x})\le 0, \ l\in U,
  \end{equation}
\end{subequations}
where the local objective functions $f_i, i\in V$, and constraint functions $g_l, l\in U$, are smooth and depend only on neighboring variables, and the constraint matrix ${\bf A} = [a(k,j)]_{k\in W, j\in V}$ has a small bandwidth; see \Cref{preliminaries.section} for a detailed description of the problem setup.  This paper extends our earlier work \cite{Emirov2022}, in which a distributed divide-and-conquer algorithm is proposed to handle an unconstrained convex optimization problem.

The constrained convex optimization framework \eqref{convexoptimizationgeneral.def}, in which the global objective $F$ is composed of local objectives $f_i$ associated with individual agents $i$ in the network, has extensive applications across a broad spectrum of fields, including distributed machine learning, environmental monitoring, smart grids and energy systems, and predictive control of models. It has received significant attention from researchers working in these and related fields; see \cite{Yang2019a, Bertsekas2015, Cao2013, Nedich2015, Yang2010} and  references therein. Many algorithms, such as the augmented Lagrangian method (ALM),  the alternating direction method of multipliers (ADMM), and the primal-dual subgradient method, have been proposed to solve the above constrained convex optimization problem with strong empirical performance and some theoretical guarantees \cite{Boyd11, Carli2020, Falsone2017, Chang2014, Liang2020, Nedic2009, Shi2015, Nedic2017}.  However, the practical implementation of these algorithms may be infeasible in many large-scale applications due to significant communication overhead, computational limitations, and associated costs.
In this paper, we propose a fully distributed algorithm to solve the constrained convex optimization problem \eqref{convexoptimizationgeneral.def} on networks. Our divide-and-conquer (DAC) approach consists of three key steps:
\begin{enumerate}
\item \emph{Decomposition}: The original constrained convex optimization problem is partitioned into a set of interactive subproblems, each centered around a fusion center located at a vertex in $\Lambda\subseteq V$;

\item \emph{Local Computation}:  These subproblems are solved independently using the computational resources available at the respective fusion centers;

\item \emph{Coordination and Update}:  The solutions are propagated and updated across neighboring fusion centers to ensure global coordination and  exponential convergence throughout the network.
\end{enumerate}

In our distributed and decentralized implementation of the proposed DAC algorithm, all data storage, exchange, and numerical computation are performed at fusion centers located at the vertices in $\Lambda\subseteq V$; see Algorithm \ref{Main.algorithm}. The selection of fusion centers can be tailored to specific application requirements. For instance, choosing a single fusion center results in a centralized implementation, whereas selecting all vertices as fusion centers yields a fully decentralized system, resembling autonomous robotic control systems operating without centralized monitoring.

The interactive subproblems are substantially smaller than the original optimization problem and can be solved more efficiently under resource constraints, as each subproblem depends  on the neighboring state variables associated with its fusion center, and then consequently the computational domain of each fusion center aligns approximately with its local neighborhood; see \eqref{DACwithoutinequality.defb} and \Cref{fusioncenter.subsection}.

At each update step of the proposed DAC algorithm, each fusion center exchanges the solution of its local subproblem only with a small number of neighboring fusion centers. This localized interaction significantly reduces communication overhead and enables the DAC approach to scale efficiently with both the size and structural complexity of the network.

In contrast to ALM and ADMM, our approach does not require the construction of a global solution at each iteration. This key feature enables the DAC algorithm to operate in a fully distributed and decentralized manner, significantly reducing communication overhead and enhancing the scalability. Therefore, the proposed method is well-suited for large-scale network systems where centralized coordination is impractical or costly.

This paper is organized as follows. \Cref{preliminaries.section} outlines the
assumptions regarding the underlying graph $\cG=(V, E)$, the objective function $F=\sum_{i\in V} f_i$, the inequality constraint functions $g_l, l\in U\subseteq V$, and  the linear constraint matrix ${\bf A} 
$ associated with the constrained convex optimization problem \eqref{convexoptimizationgeneral.def}.
This section also details the memory, communication, and computational requirements at the fusion centers necessary for implementing the proposed DAC algorithm.
\Cref{dac.section} introduces the DAC algorithm for solving the constrained optimization problem \eqref{convexoptimizationgeneral.def} and provides a convergence analysis; see Algorithm \ref{Main.algorithm} and Theorems \ref{maintheorem1.thm}, \ref{inexactmaintheorem1.thm} and \ref{thm:logbarrier}.   Section \ref{numericalexperiments.section} presents numerical experiments that illustrate the performance and effectiveness of the DAC algorithm. Finally, Section \ref{proofs.section} contains the proofs of the main theoretical results.

\section{Problem setting}\label{preliminaries.section}

In this follow-up to our earlier work \cite{Emirov2022}, we use the same setting on  the underlying graph  ${\mathcal G}:=(V, E)$,
the objective function $F$, and the fusion center $\Lambda\subseteq V$ for distributed and decentralized implementation of the proposed divide-and-conquer algorithm. 
In addition, we assume that the constrained functions $g_l, l\in U\subseteq V$, are local, smooth and convex, and the constrained matrix ${\bf A}$ are local and stable.

\subsection{Graphs with polynomial growth property}\label{polynomialgrowth.subsection}
In this paper, we consider the  constrained convex optimization problem \eqref{convexoptimizationgeneral.def}
with 
the underlying graph  ${\mathcal G}$ being finite, simple and connected,
and  satisfying the following  {\em polynomial growth property}: 

\begin{assumption}\label{PolynomialGrowth.assumption}
 {\em  There exist positive constants  $d(\cG)$ and $D_1(\cG)$  such that
  \begin{equation}\label{PolynomialGrowth}
    \mu_{\cG}(B(i,R))\le D_1(\cG)(R+1)^{d(\cG)}\ \ \text{for all}\  i\in V \text{ and } \ R\geq 0.
  \end{equation}
Here the   counting measure $\mu_\cG$ on the graph  $\cG$  assigns each vertex subset $V_1\subseteq V$
its cardinality,    the   geodesic distance   $\rho(i, j)$ between vertices $i$ and $j\in V$ is the number of edges in the shortest path to connect them, 
and the  closed ball
  $$B(i,R):=\big\{j\in V, \ \rho(i,j)\le R\big\}, \ R\ge 0$$
represents  the collection of
 all $R$-neighbors of a vertex $i\in V$.
}\end{assumption}

The minimal positive constants $d(\cG) $ and $D_1(\cG)$ in the polynomial growth property \eqref{PolynomialGrowth} are known as {\em Beurling dimension} and {\em density} of the graph $\cG$ respectively \cite{Cheng2019, Shin19, Yang2013}.

In this paper, we denote the order  of the underlying graph ${\mathcal G}$ by $N = \mu_{\mathcal G}(V)$, and write $V=\{1, \ldots, N\}$ for simplicity.

\subsection{Objective functions}\label{objectivefunction.subsection}

In this paper, we require that
local objective functions
$f_i, i\in V$, in the decomposition  of   the  global objective function 
  \begin{equation}\label{objectivefunction.decomposition}
 F({\bf x})=\sum_{i\in V} f_i({\bf x}),
 \end{equation}
in  the 
 constrained optimization \eqref{convexoptimizationgeneral.def}  
 are  smooth and  centered around the vertices of the underlying graph ${\mathcal G}$.

 \begin{assumption}\label{assump:f}
The local  objective functions $f_i, i\in V$, in \eqref{objectivefunction.decomposition}
 are continuously differentiable and depend only on neighboring variables  $x_{j}, j\in B(i, m)$,  
 where $m\ge 1$ 
and ${\bf x}=[x_j]_{j\in V}$ with $x_j\in {\mathbb R}, j\in V$.
\end{assumption}
 
 In this paper, we call the minimal integer $m$ in Assumption  \ref{assump:f} as the {\em neighboring radius} 
 of  the local objective functions $f_i, i\in V$, and we use it 
  to describe their localization.  For the special case that $m=0$, 
 the objective function $F$ is  separable, i.e., 
  $$F(x)=\sum_{i\in V} g_i(x_i)$$
  for some continuously differentiable functions $g_i, i\in V$, on the real line.

Let $V_1, V_2\subseteq V$.
For a  block matrix ${\bf B}=[b(i,j)]_{i\in V_1,j\in V_2}$  with matrix  entries $b(i,j)\in {\mathbb R}$ for $i\in V_1$ and $j\in V_2$, we define its
 {\em geodesic-width}
by
$$\omega({\bf B})=\min \big\{ \omega: \
  \
  b(i,j)=0 \ \ {\rm  for\ all} \ \  i,j\in V \ \ {\rm with} \  \  \rho(i,j)>\omega\big\}.$$
Therefore the neighboring requirements of the local objective functions $f_i, i\in V$,
 in  Assumption \ref{assump:f}
 can be described as
\begin{equation}\label{assumption2.characterization}
  \omega\Big( \Big [\frac{\partial f_i({\bf x})}{\partial x_j}\Big ]_{i, j\in V}\Big)\le m 
\end{equation}
where ${\bf x}=[x_j]_{j\in V}\in {\mathbb R}^N$.

\smallskip

In this paper,  we use ${\bf A} \preceq {\bf B}$ to denote that ${\bf B}-{\bf A}$ is positive semi-definite.
In addition to the neighboring requirement in Assumption   \ref{assump:f} for the local objective functions $f_i, i\in V$,
we also require the global objective function $F$ in  the
 constrained optimization \eqref{convexoptimizationgeneral.def}
 is smooth and strongly convex.

\begin{assumption}\label{assump:J}
  There exist  positive constants $0<c_1<L_1<\infty$ and  positive definite matrices ${\bf J}({\bf x}, {\bf y})$  
  such that
  \begin{subequations}\label{assump:J.eq1}
    \begin{equation}
      \omega({\bf J}({\bf x},{\bf y}))\le 2m, \quad c_1\mI \preceq {\bf J}({\bf x}, {\bf y}) \preceq L_1\mI,
    \end{equation}
    and
    \begin{equation}\label{eq:gradient_difference}
      \nabla F({\bf x}) - \nabla F({\bf y}) = {\bf J}({\bf x}, {\bf y})({\bf x} -{\bf y})
    \end{equation}
  \end{subequations}
  hold for all ${\bf x},{\bf y}\in {\mathbb R}^N$, where $m$ is the neighboring radius of the local objective functions $f_i, i\in V$. 
\end{assumption}

We remark that Assumption  \ref{assump:J} holds for the global objective function $F$  when it
satisfies the conventional strict convexity condition,
$$c_1 {\mI} \preceq \nabla^2 F({\bf x})\preceq L_1 {\mI}\ \ {\rm for \ all} \    {\bf x}\in {\mathbb R}^N,$$
and
the local objective functions $f_i, i\in V$, are twice continuously differentiable and satisfy Assumption  \ref{assump:f}
\cite{Zeidler1990, Emirov2022, Sun2014}.
For $1\le p\le \infty$, we denote the standard 
 $\ell^p$-norm of a graph signal ${\bf x}\in {\mathbb R}^N$ by $\|{\bf x}\|_p$.
 By Assumption \ref{assump:J}, we see that the gradient $\nabla F$ of the objective function $F$ has bi-Lipschitz property,
$$ c_1 \|{\bf x}-{\bf y}\|_2\le \|\nabla F({\bf x})-\nabla F({\bf y})\|_2\le  L_1 \|{\bf x}-{\bf y}\|_2\  \  {\rm for \ all} \ \  {\bf x}, {\bf y}\in {\mathbb R}^N.$$

\subsection{Fusion centers for distributed  and decentralized implementation}\label{fusioncenter.subsection}
In this paper, the proposed DAC algorithm  are implemented at fusion centers   $\Lambda\subseteq V$ with 
enough memory,  proper communication
bandwidth and high computing power.

Based on the distribution of fusion centers  $\Lambda$, we
divide the whole graph $V$ into  a family of nonoverlapping regions
$D_\lambda\subseteq V, \lambda\in \Lambda$,  around fusion centers that satisfies
\begin{equation}\label{governingvertices.def}
  \cup_{\lambda\in \Lambda} D_\lambda=V\ \  {\rm and} \  \ D_\lambda\cap D_{\lambda'}=\emptyset \ \ {\rm for \ distinct} \ \lambda, \lambda'\in \Lambda. 
\end{equation}
In practice, we may
require that the above {\em governing regions}  $D_\lambda, \lambda\in \Lambda$, 
are contained in some $R_0$-neighborhood of  fusion centers, i.e.,
$D_\lambda\subseteq B(\lambda, R_0), \ \lambda\in \Lambda$
for some $R_0\ge 0$.  A conventional selection of the above nonoverlapping network is
the Voronoi diagram of $\cG$ associated with  $\Lambda$, that satisfies
\begin{equation*}
  \big\{i\in V:\ \rho(i,\lambda)< \rho(\{i\} , \Lambda\backslash \{\lambda\}) \big\}\subseteq D_\lambda
  \subseteq \big\{i\in V:\ \rho(i,\lambda)\le  \rho(\{i\} , \Lambda\backslash \{\lambda\})\big\}
\end{equation*}
for all $\lambda\in \Lambda$,
 where we denote the distance between two vertex subsets  $V_1, V_2\subseteq V$ by $\rho(V_1, V_2)=\inf_{i\in V_1, j\in V_2} \rho(i,j)$.

Given  $R>0$,
we say that  $D_{\lambda, R}, \lambda \in \Lambda$ is an {\em extended $R$-neighborhood} of the nonoverlapping region $D_\lambda, \lambda\in \Lambda$, if
satisfies
\begin{equation}\label{extendedneighbor.def}
  D_\lambda\subseteq D_{\lambda, R} \ \ {\rm and} \ \ \rho(D_\lambda, V\backslash D_{\lambda, R})>  R \ \ {\rm for \ all}\  \lambda \in \Lambda.
\end{equation}
An example of extended $R$-neighborhoods
of the nonoverlapping network $D_\lambda, \lambda\in \Lambda$, is its $R$-neighborhoods
$$B(D_\lambda, R)=\cup_{i\in D_\lambda} B(i, R)=\{j\in V: \ \rho(j,i)\le R\ {\rm for \ some} \ i\in D_\lambda\},\  \lambda\in \Lambda.$$
In\ the proposed  DAC algorithm described in the next section,  we divide
the  constrained convex optimization problem \eqref{convexoptimizationgeneral.def} into a family of
 interactive local  constrained minimization problems on extended $R$-neighbors
$D_{\lambda, R}, \lambda \in \Lambda$, and to solve the corresponding local constrained minimization problem at the fusion center $\lambda\in \Lambda$.

For $l\ge 0$, we denote the  $l$-neighbors  of $D_{\lambda, R}, \lambda\in \Lambda$, by
\begin{equation}
  \label{dlambdarl.def}
  D_{\lambda, R, l}=\{i\in V,\ \rho(\{i\}, D_{\lambda, R})\le l\}, \ \lambda\in \Lambda,
\end{equation}
and let
$\Lambda_{\lambda,  R,  4m}^{\rm out}=\{\lambda'\in \Lambda, D_{\lambda}\cap D_{\lambda', R, 4m}\ne \emptyset\}$
and
$\Lambda_{\lambda, R,  4m}^{\rm in}=\{\lambda'\in \Lambda, D_{\lambda'}\cap D_{\lambda, R, 4m}\ne \emptyset\}, \lambda \in \Lambda$ be
out-neighboring set and in-neighboring set of the fusion center $\lambda\in \Lambda$ respectively.
To implement our proposed DAC algorithm at the fusion centers, we require that each fusion center 
$\lambda\in \Lambda$
 be equipped with sufficient memory for data storage, adequate communication bandwidth for data exchange, and high computational capacity to solve local constrained minimization problems.

\begin{assumption}\label{fusioncenter.assump}
  (i)  {\rm (Memory)} \  Each fusion center $\lambda\in \Lambda$ can  store vertex sets $D_\lambda$, $D_{\lambda, R}$,  $D_{\lambda, R,  m}$, $D_{\lambda, R, 2m}$, $D_{\lambda, R, 4m}$, neighboring fusion  sets $\Lambda_{\lambda, R, 4m}^{\rm out}$ and $\Lambda_{\lambda, R, 4m}^{\rm in}$, and the vectors  $\chi_{D_{\lambda, R, 4m} } {\bf x}^{(n)}$  and 
  ${\bf w}_\lambda^{(n)}$ at each iteration, and reserve enough memory used for storing local objective functions $f_i, i\in D_{\lambda, R, m}$ and solving the local minimization problem \eqref{DACwithoutinequality.deflocala} at each iteration.

  (ii) {\rm (Computing power)} \ Each fusion center $\lambda\in \Lambda$ has computing facility to solve the local minimization problem  \eqref{DACwithoutinequality.deflocala}.

  (iii) {\rm (Communication bandwidth)} \ Each fusion center $\lambda\in \Lambda$ can send data $x_i^{(n)}, i\in D_\lambda$ to fusion centers $\lambda' \in  \Lambda_{\lambda, R, 4m}^{\rm out}$ and receive data $x_i^{(n)}, i\in D_{\lambda'}$ from fusion centers $\lambda' \in  \Lambda_{\lambda, R, 4m}^{\rm in}$ at each iteration.
\end{assumption}

\subsection{Constraints}\label{constraints.section}
In this paper, we require that the constraint functions $g_l, l\in U\subseteq V$, in  the
 constrained optimization \eqref{convexoptimizationgeneral.def}, are smooth,  convex and 
 centered around the vertices in $U$.

\begin{assumption}\label{assum:g_banded} For every $l\in U$, the local constraint function $g_{l}({\bf x})$   is convex and twice continuously differentiable, and  depends only on $x_j, j\in B(l,m)$, where ${\bf x}=[x_j]_{j\in V}$ with $x_j\in {\mathbb R}, j\in V$, and
  the integer $m$ is the same as neighboring radius of the local objective functions $f_i, i\in V$.
\end{assumption}

Similar to the objective functions, we can show that the neighboring requirement
for the  constraint function $g_l, l\in U$, in Assumption  \ref{assum:g_banded} can be described as
\begin{equation}\label{assumption2.characterization+}
  \omega\Big( \Big [\frac{\partial g_l({\bf x})}{\partial x_j}\Big ]_{l\in U, j\in V}\Big)\le m
  \ \ {\rm for \ all} \  {\bf x}=[x_j]_{j\in V}\in {\mathbb R}^N. 
\end{equation}
 
 \smallskip
 
In this paper, we require that the constrained matrix ${\bf A}=[ a(k, j)]_{k\in W, j\in V}$ in the
constrained convex optimization problem \eqref{convexoptimizationgeneral.def} is of locally full-rank,
where 
 $a(k,j)\in {\mathbb R}$ for $k\in W, j\in V$:
 
\begin{assumption}\label{assump:A}
(i) The constraint at each vertex $k\in W\subset V$ is local, i.e.,
  $ a(k, j)=0$ for  all $k\in W$ and $j\in V$  satisfying $\rho(k,j)>2m$,
  or equivalently,
  \begin{equation} \label{assump:A2}
    \omega({\bf A}) \le 2m,
  \end{equation}
  where $m\ge 1$ is
  the neighboring radius of the local objective functions $f_i, i\in V$ in Assumption \ref{assump:f}.
  
 (ii) The constrained matrix ${\bf A}$ has the  local stability:
  \begin{equation}\label{assump:A3}
    {\bf A}_{\lambda, R} {\bf A}_{\lambda, R}^T\succeq { c}_2^2\ {\bf I}_{W_{\lambda, R}}, \ \lambda\in \Lambda,
  \end{equation}
hold for some positive constant $c_2$, where
${\bf A}_{\lambda, R}= [a(k,j)]_{k\in W_{\lambda, R}, j\in D_{\lambda, R}}$ is local
constrained matrix associated with the extended neighbor $D_{\lambda, R}$ and
\begin{align}\label{eq:Gamma_lambda}
  W_{\lambda, R} = \{k\in  W: \  a(k,j) \neq 0 \text{ for some } j\in \DlR\}.
\end{align}
\end{assumption}

\section{Divide-and-conquer algorithm and exponential convergence}\label{dac.section}

In this section, we propose an iterative divide-and-conquer algorithm
to solve the constrained convex optimization problem \eqref{convexoptimizationgeneral.def}
and establish its exponential convergence. For this purpose, we first examine the scenario without inequality constraints in Section \ref{sec:equality}, and
subsequently extend the framework to accommodate inequality constraints via the log-barrier method in Section  \ref{sec:inequality}.

\subsection{Constrained optimization problems without inequality constraints}\label{sec:equality}

In this subsection, we propose an iterative DAC algorithm to solve the  convex optimization problem \eqref{convexoptimizationgeneral.def}
without the inequality constraints \eqref{convexoptimizationgeneral.def3}:
\begin{equation} \label{convexoptimizationlinearconstraint.def}
  {\bf x}^*=\arg\min_{{\bf x}\in  {\mathbb R}^N} F({\bf x})\ \ {\rm subject \ to }\ \
  {\bf A} {\bf x} = {\bf b}.
\end{equation}

To this end, we introduce a projection map to decompose the global optimization problem \eqref{convexoptimizationlinearconstraint.def} into a family of local subproblems, together with its adjoint map, which aggregates the local solutions into a coherent global result. For a subset $V_1\subseteq V$, we use $\chi_{_{V_1}}: {\mathbb R}^N\rightarrow {\mathbb R}^{N_1}$ to denote the selection map $\chi_{_{V_1}} {\bf x} = [x_i]_{i\in V_1}$ for ${\bf x}=[x_i]_{i\in V} \in {\mathbb R}^{N}$,
and define its adjoint map
$\chi^*_{V_1}: {\mathbb R}^{N_1}\rightarrow {\mathbb R}^{N}$ by $\chi^*_{V_1} {\bf u} = \sum_{i\in V_1} u_i e_i$ for ${\bf u}=[u_i]_{i\in V_1}\in {\mathbb R}^{N_1}$, where $e_i$ is the $i$-th unit vector in ${\mathbb R}^N$ and $N_1$ is the cardinality of the set $V_1$. 
For $W\subseteq V$, 
  the projection operator \(\mI_{W} = \chi_{W}^{*}\chi_{W}\) makes the components of a vector \({\bf x}\) zero if the corresponding vertex is not in the set \(W\).

We next describe the proposed DAC algorithm. Let $\Lambda\subseteq V$ be the set of fusion centers, $\{D_\lambda, \lambda\in \Lambda\}$ be nonoverlapping regions
satisfying \eqref{governingvertices.def}, $D_{\lambda, R}$ and $W_{\lambda, R}, \lambda\in \Lambda$ be given in \eqref{extendedneighbor.def} and \eqref{eq:Gamma_lambda} respectively; see 
Section \ref{fusioncenter.subsection}. Starting from an arbitrary initial point ${\bf x}^{(0)}\in {\mathbb R}^N$, at each iteration $n\ge 0$ we break down the optimization problem \eqref{convexoptimizationlinearconstraint.def}
into a family of local  constrained optimization problems on overlapping extended $R$-neighbors $D_{\lambda, R}$ of the fusion centers $\lambda\in \Lambda$, and solve them in a distributed and decentralized manner for every $\lambda\in \Lambda$:
\begin{align}\label{DACwithoutinequality.defb}
 \begin{dcases} 
        {\bf w}_{\lambda}^{(n)}  =   \mathop{\arg\min}\limits_{{\bf u} }  F(\chi^*_{_{D_{\lambda, R}}} {\bf u} + \mI_{_{V\backslash D_{\lambda, R}}} {\bf x}^{(n)}) \\
    \quad  {\rm s. t. }  \  \chi_{_{W_{\lambda, R}}} {\bf A} (\chi^*_{_{D_{\lambda, R}}} {\bf u} +\mI_{_{V\backslash D_{\lambda, R}}}  {\bf x}^{(n)}) = \chi_{_{W_{\lambda, R}}}{\bf b}.
    \end{dcases}
\end{align}
We then use local solutions ${\bf w}_{\lambda}^{(n)}, \lambda\in \Lambda$, to update
  \begin{align}\label{DACwithoutinequality.defd}
    {\bf x}^{(n+1)} = \sum_{\lambda\in \Lambda} \mI_{_{D_{\lambda}}} \chi^*_{\DlR} {\bf w}_\lambda^{(n)}.
  \end{align}

Write  ${\bf x}^{(n)}=[x_i^{(n)}]_{i\in V}$ and ${\bf w}_{\lambda}^{(n)}=[w_{\lambda, i}^{(n)}]_{i\in D_{\lambda, R}}$ for $ n\ge 0$,
 and define $D_{\lambda, R, l}, l\ge 0$  as in \eqref{dlambdarl.def}.
Following the arguments in \cite{Emirov2022} and applying Assumptions \ref{assum:g_banded} and \ref{assump:A}, we can reformulate the local constrained optimization problem
\eqref{DACwithoutinequality.defb} as
\begin{align}\label{DACwithoutinequality.deflocala}
  \begin{dcases}
   \big(w_{\lambda, i}^{(n)}\big)_{i\in D_{\lambda, R}}   =
    \arg\min_{\bf u}  \sum_{j\in  D_{\lambda, R, m}}
  f_j\big(\chi^*_{D_{\lambda, R}}{\bf u}+ \mI_{_{D_{\lambda, R, 2m}\backslash D_{\lambda, R}}}{\bf x}^{(n)}\big)\\
 \quad  {\rm s. t. }
  \sum_{j\in D_{\lambda, R}} a(k, j) u_j=b_k-\sum_{j\in D_{\lambda, R, 4m} \backslash D_{\lambda, R}} a(k,j) x_j^{(n)},\ k\in W_{\lambda, R},
  \end{dcases}
\end{align}
and we can rewrite the updating procedure \eqref{DACwithoutinequality.defd} as
\begin{equation}
  \label{DACwithoutinequality.deflocalb}
  x^{(n+1)}_i= w_{\lambda, i}^{(n)} \ \ {\rm if} \ i\in D_{\lambda},
\end{equation}
where the above update is well defined by \eqref{governingvertices.def} and \eqref{extendedneighbor.def}.
We call the above procedure the {\em divide-and-conquer} (DAC) algorithm.

For ${\bf u}=[u_i]_{i\in D_{\lambda, R}}$,  we observe that
$\chi^*_{D_{\lambda, R}}{\bf u}+ \mI_{_{D_{\lambda, R, 2m}\backslash D_{\lambda, R}}}{\bf x}^{(n)}$
has its $i$-th components taking the value $ u_i$ for $i\in D_{\lambda, R}$, ${ x}_i^{(n)}$ for $i\in
  D_{\lambda, R, 2m}\backslash D_{\lambda, R}$ and $0$ for all other vertices $i\in V\backslash D_{\lambda, R, 2m}$.
This observation, together with
\eqref{DACwithoutinequality.deflocala} and \eqref{DACwithoutinequality.deflocalb} implies that
the proposed DAC algorithm \eqref{DACwithoutinequality.defb} can be implemented
in fusion centers $\lambda\in \Lambda$ equipped with enough processing power and resources. We present the implementation of the proposed DAC algorithm in every fusion center in Algorithm \ref{Main.algorithm}.

\begin{algorithm}[t] 
  \caption{Implementation of the DAC algorithm \eqref{DACwithoutinequality.defb} and \eqref{DACwithoutinequality.defd} at a fusion center $\lambda\in \Lambda$.}
  \label{Main.algorithm}
  \begin{algorithmic}  
  \STATE{\bf Initialization}:  Maximum iteration count $T$; vertex sets $D_\lambda$, $D_{\lambda, R}$, $D_{\lambda, R, 2m}$ and
    $D_{\lambda, R, 4m}$, neighboring fusion  sets
    $\Lambda_{\lambda, R, 4m}^{\rm out}$ and
    $\Lambda_{\lambda, R, 4m}^{\rm in}$,
    local objective functions $f_i, i\in D_{\lambda, R, m}$,
    and initial guess $\chi_{_{D_{\lambda, R, 4m}}}{ \bf x}^0$, i.e., its components $x^{(0)}_i, i\in D_{\lambda, R, 4m}$.
    \STATE{\bf Iteration}:

    {\bf for} \ $n=0, 1, \ldots, T$
    \begin{itemize}
  \item  Solve the local constrained minimization problem
            \eqref{DACwithoutinequality.deflocala} to obtain
            ${\bf w}_{\lambda}^{(n)}=[w_{\lambda, i}^{(n)}]_{i\in D_{\lambda, R}}$.

      \item Update ${x}^{(n+1)}_i$ by  $w_{\lambda, i}^{(n)}$ for $i\in D_\lambda$; see \eqref{DACwithoutinequality.deflocalb}.

      \item Send data ${x}^{(n+1)}_i, i\in D_\lambda$, to all out-neighboring fusion centers $\lambda'\in \Lambda_{\lambda,  R,  4m}^{\rm out}$.

      \item Receive data ${x}^{(n+1)}_j, j\in D_{\lambda'}$ from in-neighboring fusion centers $\lambda'\in \Lambda_{\lambda,  R,  4m}^{\rm in}$
            and obtain $\chi_{_{D_{\lambda, R, 4m}}} {\bf x}^{(n+1)}$.


    \end{itemize}
    {\bf end}
    \STATE{\bf Output}:  $\chi_{_{D_{\lambda, R, 4m}}} {\bf x}^{(T+1)}$,  local approximation to the minimizer ${\bf x}^*$ of the
    constrained optimization problem \eqref{convexoptimizationlinearconstraint.def}.

  \end{algorithmic}\vspace{-.03in}
\end{algorithm}

 Denote the operator norm of a matrix ${\bf A}$ by   \begin{equation}\label{assump:A1}
    \|{\bf A}\|:=\sup_{\|{\bf x}\|_2=1} \|{\bf A}{\bf x}\|_2<\infty.
  \end{equation}
  For the proposed DAC algorithm described in \eqref{DACwithoutinequality.defb} and \eqref{DACwithoutinequality.defd}, we show that it converges exponentially to the solution of the original constrained optimization problem \eqref{convexoptimizationlinearconstraint.def}
when the parameter $R$ is appropriately chosen; see Section \ref{maintheorem.thm.pfsection} for  the detailed proof.

\begin{theorem}\label{maintheorem1.thm}
  Suppose that Assumptions \ref{PolynomialGrowth.assumption}, \ref{assump:f}, \ref{assump:J}  
  and \ref{assump:A} hold.
  Consider the constrained convex optimization problem \eqref{convexoptimizationlinearconstraint.def}
  and denote its unique minimizer by ${\bf x}^*$.
  Set
       \begin{equation}\label{kappa.def}
    \kappa=  \Big(\frac{c_1+\|{\bf A}\|}{c_1}\Big)^2 (L_1+\|{\bf A}\|) \max\Big( \frac{1}{c_1}, \frac{L_1}{c_2^2}\Big),
  \end{equation}
  and
  \begin{equation}\label{maintheorem1.thm.eq1}
    \delta_R= D_1({\mathcal G}) d!
    \Big(\frac{1}{4m}\ln \Big(\frac{\kappa^2-1}{\kappa^2+1}\Big)\Big)^{-d} (R+2)^{d} \Big(\frac{\kappa^2-1}{\kappa^2+1}\Big)^{R/(4m)}, \ R\ge 1,
  \end{equation}
  where
  $d:=d(\cG)$ and $D_1(\cG)$ are Beurling dimension and density of the graph ${\mathcal G}$ respectively,
   $m, c_1,  c_2$ and $L_1$  are constants in Assumptions \ref{assump:f}, \ref{assump:J} and \ref{assump:A}. 
    Let ${\bf x}^{(n)}, n\ge 0$, be the sequence generated in the iterative algorithm \eqref{DACwithoutinequality.defb}. If the radius parameter $R$ is chosen such that
  \begin{equation}\label{deltaR.eq} \delta_R<1,
  \end{equation}
  then ${\bf x}^{(n)}, n\ge 0$, converges to ${\bf x}^*$ exponentially in $\ell^p, 1\le p\le \infty$, with convergence rate $\delta_R$:
  \begin{equation}\label{maintheorem1.thm.eq4}
    \|{\bf x}^{(n)} - {\bf x}^*\|_p \le (\delta_R)^n \|{\bf x}^{(0)} - {\bf x}^*\|_p, \quad n\ge 0.
  \end{equation}
\end{theorem}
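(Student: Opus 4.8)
The plan is to identify $\mathbf x^*$ as a fixed point of the DAC iteration and to show that the iteration map contracts every $\ell^p$-norm by the factor $\delta_R$; the contraction will come from an exponential off-diagonal decay estimate for the inverses of the local KKT matrices, combined with the polynomial growth property \eqref{PolynomialGrowth} of $\mathcal G$. By Assumption \ref{assump:J}, $F$ is strongly convex, so \eqref{convexoptimizationlinearconstraint.def} has a unique minimizer $\mathbf x^*$, characterized by $\nabla F(\mathbf x^*)+\mathbf A^T\boldsymbol\mu^*=\mathbf 0$ and $\mathbf A\mathbf x^*=\mathbf b$ for some multiplier $\boldsymbol\mu^*$. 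First I would check that, when the boundary data in \eqref{DACwithoutinequality.defb} equals $\mathbf x^{(n)}=\mathbf x^*$, the truncation $\chi_{D_{\lambda,R}}\mathbf x^*$ solves the local problem: feasibility is inherited from $\mathbf A\mathbf x^*=\mathbf b$, the local objective is strongly convex by Assumption \ref{assump:J}, and local stationarity holds with multiplier $\chi_{W_{\lambda,R}}\boldsymbol\mu^*$, because by the definition \eqref{eq:Gamma_lambda} of $W_{\lambda,R}$ together with $\omega(\mathbf A)\le 2m$ the rows of $\mathbf A$ indexed outside $W_{\lambda,R}$ vanish on $D_{\lambda,R}$, so $\chi_{D_{\lambda,R}}\mathbf A^T\chi^*_{W_{\lambda,R}}\chi_{W_{\lambda,R}}\boldsymbol\mu^*=\chi_{D_{\lambda,R}}\mathbf A^T\boldsymbol\mu^*$. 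Uniqueness of the strongly convex local minimizer forces $\mathbf w_\lambda^{(n)}=\chi_{D_{\lambda,R}}\mathbf x^*$, hence $\mathbf x^{(n+1)}=\mathbf x^*$ by \eqref{DACwithoutinequality.defd}.

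For the contraction, fix $n$ and $\lambda$, set $\mathbf y_\lambda=\chi^*_{D_{\lambda,R}}\mathbf w_\lambda^{(n)}+\mathbf I_{V\setminus D_{\lambda,R}}\mathbf x^{(n)}$ and $\mathbf e=\mathbf y_\lambda-\mathbf x^*$, so $\mathbf I_{V\setminus D_{\lambda,R}}\mathbf e=\mathbf I_{V\setminus D_{\lambda,R}}(\mathbf x^{(n)}-\mathbf x^*)$ is known ``boundary'' data. Subtracting the global KKT relations from the local ones satisfied by $\mathbf y_\lambda$ and applying \eqref{eq:gradient_difference} with $\mathbf J:=\mathbf J(\mathbf y_\lambda,\mathbf x^*)$ yields the symmetric saddle-point system
\[
\mathbf M_\lambda\begin{pmatrix}\chi_{D_{\lambda,R}}\mathbf e\\ \boldsymbol\eta\end{pmatrix}=-\begin{pmatrix}\chi_{D_{\lambda,R}}\mathbf J\\ \chi_{W_{\lambda,R}}\mathbf A\end{pmatrix}\mathbf I_{V\setminus D_{\lambda,R}}\mathbf e,\qquad \mathbf M_\lambda:=\begin{pmatrix}\chi_{D_{\lambda,R}}\mathbf J\chi^*_{D_{\lambda,R}}&\chi_{D_{\lambda,R}}\mathbf A^T\chi^*_{W_{\lambda,R}}\\ \chi_{W_{\lambda,R}}\mathbf A\chi^*_{D_{\lambda,R}}&\mathbf 0\end{pmatrix}.
\]
Since $\omega(\mathbf J)\le 2m$ and $\omega(\mathbf A)\le 2m$, the matrix $\mathbf M_\lambda$ is geodesic-banded of width $\le 2m$ and the right-hand side involves only components of $\mathbf e$ in a bounded-width neighbourhood of $D_{\lambda,R}$. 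Solving for $\chi_{D_{\lambda,R}}\mathbf e$, restricting to coordinates $i\in D_\lambda$, and invoking the update \eqref{DACwithoutinequality.deflocalb} gives a pointwise bound
\[
|x_i^{(n+1)}-x_i^*|\le\sum_{j\in V}K(i,j)\,|x_j^{(n)}-x_j^*|,\qquad i\in V,
\]
where $K(i,j)$ is a finite sum of products of an entry of $\mathbf M_{\lambda(i)}^{-1}$ ($\lambda(i)$ being the unique $\lambda$ with $i\in D_\lambda$) and an entry of a width-$2m$ block of $\mathbf J$ or $\mathbf A$, and, crucially, $K(i,j)=0$ whenever $\rho(i,j)\le R$, since $\rho(D_\lambda,V\setminus D_{\lambda,R})>R$ by \eqref{extendedneighbor.def}.

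The crux is then a uniform exponential off-diagonal bound $|(\mathbf M_\lambda^{-1})(i,j)|\le C_1(\tfrac{\kappa^2-1}{\kappa^2+1})^{\rho(i,j)/(4m)}$, with $\kappa$ as in \eqref{kappa.def} and $C_1$ independent of $\lambda$ and $R$. I would first establish uniform two-sided singular-value bounds on $\mathbf M_\lambda$: the upper bound from $\mathbf J\preceq L_1\mathbf I$ and $\|\mathbf A\|$, the lower bound from $c_1\mathbf I\preceq\mathbf J$ together with the local stability \eqref{assump:A3} of $\mathbf A_{\lambda,R}$ --- the standard saddle-point estimate yields $\|\mathbf M_\lambda\|\,\|\mathbf M_\lambda^{-1}\|\le\kappa$, the bound reassembling precisely the factors $(c_1+\|\mathbf A\|)/c_1$, $L_1+\|\mathbf A\|$, $1/c_1$, $L_1/c_2^2$ of \eqref{kappa.def}. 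Given this and $\omega(\mathbf M_\lambda)\le 2m$, I would run a Neumann-series estimate on the symmetrized system: writing $\mathbf M_\lambda^T\mathbf M_\lambda=s(\mathbf I-\mathbf T)$ with $\|\mathbf T\|\le\tfrac{\kappa^2-1}{\kappa^2+1}$ (the value $\tfrac{\mathcal K-1}{\mathcal K+1}$ at the condition-number bound $\mathcal K\le\kappa^2$) and $\mathbf T$ of geodesic width $\le 4m$, the elementary facts $|(\mathbf T^k)(i,j)|\le\|\mathbf T\|^k$ and $(\mathbf T^k)(i,j)=0$ for $\rho(i,j)>4mk$ give $|((\mathbf M_\lambda^T\mathbf M_\lambda)^{-1})(i,j)|\le C(\tfrac{\kappa^2-1}{\kappa^2+1})^{\rho(i,j)/(4m)}$, and $\mathbf M_\lambda^{-1}=(\mathbf M_\lambda^T\mathbf M_\lambda)^{-1}\mathbf M_\lambda^T$ (with $\mathbf M_\lambda^T$ of width $\le 2m$) preserves this up to a constant, producing both the stated rate and the $4m$ in the exponent. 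An equivalent route is the $2\times 2$ block-inverse formula, expressing $\mathbf M_\lambda^{-1}$ through $\mathbf J^{-1}$ and the Schur complement $(\mathbf A_{\lambda,R}\mathbf J^{-1}\mathbf A_{\lambda,R}^T)^{-1}$, each exponentially decaying, and multiplying the two estimates --- the product again decaying exponentially precisely because $\mathcal G$ has the polynomial growth property.

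Combining the previous two paragraphs, $K(i,j)\le C_2(\tfrac{\kappa^2-1}{\kappa^2+1})^{\rho(i,j)/(4m)}$ for $\rho(i,j)>R$ and $K(i,j)=0$ otherwise, with $C_2$ absorbing the constantly many intermediate terms counted by $\mu_{\mathcal G}(B(j,2m))$. Summing over $j$ with \eqref{PolynomialGrowth} and estimating the tail $\sum_{k>R}(k+1)^{d}q^{k/(4m)}$ by comparison with the Gamma integral --- which is exactly what produces the prefactor $d!\,(\tfrac1{4m}\ln\tfrac{\kappa^2+1}{\kappa^2-1})^{-d}$ --- gives $\sup_i\sum_j K(i,j)\le\delta_R$, and since $i\mapsto\lambda(i)$ is single-valued, $\sup_j\sum_i K(i,j)\le\delta_R$ as well. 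Because $\mathbf x^{(n+1)}-\mathbf x^*$ is the gluing over the partition $\{D_\lambda\}$ of the pieces $\chi_{D_\lambda}(\mathbf x^{(n+1)}-\mathbf x^*)$, the Schur test (equivalently, Riesz--Thorin interpolation between the $\ell^1$ and $\ell^\infty$ estimates) gives $\|\mathbf x^{(n+1)}-\mathbf x^*\|_p\le\delta_R\|\mathbf x^{(n)}-\mathbf x^*\|_p$ for all $1\le p\le\infty$; under \eqref{deltaR.eq} this is a genuine contraction, and iterating yields \eqref{maintheorem1.thm.eq4}. The main obstacle is the uniform exponential decay of the third paragraph: controlling the inverses of the indefinite local KKT matrices uniformly over all fusion centers and over $R$, with the decay rate pinned to the explicit constant $\kappa$ --- this is where strong convexity, the local stability of $\mathbf A$, the bandwidth hypotheses, and the polynomial growth of the graph must all be used together, and where the bulk of the technical work (extending that of \cite{Emirov2022}) lies.
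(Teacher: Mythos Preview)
Your proposal is correct and follows essentially the same route as the paper: KKT conditions for the global and local problems are subtracted to obtain a banded saddle-point system, a Neumann-series argument on $\mathbf M_\lambda^T\mathbf M_\lambda$ (the paper's Lemma~\ref{maintheorem1.lem1}) together with the norm bounds $\|\mathbf M_\lambda\|\le L_1+\|\mathbf A\|$ and $\|\mathbf M_\lambda^{-1}\|\le\kappa/(L_1+\|\mathbf A\|)$ (Lemma~\ref{maintheorem1.lem2}) yields the exponential off-diagonal decay with base $(\kappa^2-1)/(\kappa^2+1)$, and the polynomial-growth tail sum produces $\delta_R$. The only cosmetic difference is that the paper inserts an intermediate point and uses two matrices $\mathbf J_1,\mathbf J_2$ where you use a single $\mathbf J(\mathbf y_\lambda,\mathbf x^*)$ split into blocks; your version is slightly cleaner but equivalent.
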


\medskip

Next, we consider the scenario in which the numerical solutions to the local optimization problems \eqref{DACwithoutinequality.defb} are inexact, with accuracy  up to $\epsilon_n$ at the $n$-th iteratiion, 
 as typically occurs when iterative solvers are used.
In particular, for each $\lambda\in \Lambda$ and $n\in \bN$, we assume the approximate solutions $\widetilde{{\bf w}}_\lambda^{(n)}$ of 
the local linearly constrained optimization \eqref{DACwithoutinequality.defb} satisfies
up to some accuracy $\epsilon_n$
\begin{align*}
  \| \vtheta_{\lambda}^{(n)}\|_\infty \le \epsilon_n \quad \mbox{and}\quad \|\veta_{\lambda}^{(n)}\|_\infty\le \epsilon_n,
\end{align*}
where 
\begin{subequations}\label{inexactmaintheorem1.pf.eq2}
  \begin{equation}
    \label{inexactmaintheorem1.pf.eq2a}
    {\pmb \theta}_\lambda^{(n)}:=\chi_{_{\DlR}} \nabla F\big(\chi^*_{_{\DlR}} {\tilde {\bf w}}_\lambda^{(n)} + {\bf I}_{{\DlRc}} {\bf x}^{(n)}\big)+\chi_{_{\DlR}}{\bf A}^T\chi^*_{_{W_{\lambda, R}}} \tilde {\bf v}^{(n)}_{\lambda},  \end{equation}
    \begin{equation}
    \label{inexactmaintheorem1.pf.eq2b}
    {\pmb \eta}_\lambda^{(n)}:=\chi_{_{W_{\lambda, R}}} {\bf A} \big(\chi^*_{_{\DlR}} {\tilde {\bf w}}_\lambda^{(n)} + {\bf I}_{\DlRc} {\bf x}^{(n)}\big)-\chi_{_{W_{\lambda, R}}} {\bf b},
  \end{equation}
\end{subequations}
and ${\tilde {\bf v}}_\lambda^{(n)}$ is an approximate multiplier in the Karush–Kuhn–Tucker conditions for the local linearly constrained optimization \eqref{DACwithoutinequality.defb}.
We then combine the local solutions to get the global update
\begin{align}\label{eq:tldxn+1}
  \widetilde{{\bf x}}^{(n+1)} = \sum_{\lambda\in \Lambda} \mI_{\Dl} \chi^*_{\DlR} \widetilde{{\bf w}}_\lambda^{(n)}.
\end{align}
In the following theorem, we estimate the approximation error of the sequence $\{\tilde{{\bf x}}^{(n)}\}$ to the true solution ${\bf x}^*$; see Section 
\ref{inaxactmaintheorem.thm.pfsection} 
for the detailed proof. 

\begin{theorem}\label{inexactmaintheorem1.thm}  Suppose that Assumptions \ref{PolynomialGrowth.assumption}, \ref{assump:f}, \ref{assump:J}  
  and \ref{assump:A} hold.
 Let ${\bf x}^*$ be the unique minimizer of the global optimization problem \eqref{convexoptimizationgeneral.def} and $\{\tilde{{\bf x}}^{(n)}\}$ be the  inexact solution sequence generated in the iterative algorithm \eqref{eq:tldxn+1}. If the parameter $R$ is chosen such that $\delta_R$ in \eqref{maintheorem1.thm.eq1} is strictly less than 1, then
  \begin{equation}\label{inexactmaintheorem1.thm.eq1}
    \|\widetilde{{\bf x}}^{(n)} - {\bf x}^*\|_\infty \le (\delta_R)^n \|\tilde{{\bf x}}^{(0)} - {\bf x}^*\|_\infty + \frac{2}{\sqrt{L}} \sum_{m=0}^{n-1}(\delta_R)^{n-m}\epsilon_m , \quad n\ge 1.
  \end{equation}
\end{theorem}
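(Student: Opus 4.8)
The plan is to reduce the inexact estimate to the exact one-step contraction underlying \Cref{maintheorem1.thm} together with a stability estimate for the local KKT systems that is uniform over fusion centers and iterations. Write $T$ for one exact DAC iteration, i.e.\ the map sending ${\bf y}\in{\mathbb R}^N$ to $\sum_{\lambda\in\Lambda}{\bf I}_{\Dl}\chi^*_{\DlR}{\bf w}_\lambda({\bf y})$, where ${\bf w}_\lambda({\bf y})$ solves \eqref{DACwithoutinequality.defb} with ${\bf x}^{(n)}$ replaced by ${\bf y}$; the proof of \Cref{maintheorem1.thm} establishes the one-step bound $\|T{\bf y}-{\bf x}^*\|_p\le\delta_R\|{\bf y}-{\bf x}^*\|_p$ for every ${\bf y}$, and \eqref{maintheorem1.thm.eq4} is obtained by iterating it. Fix $n$ and let $\widehat{\bf x}^{(n+1)}:=T\widetilde{\bf x}^{(n)}$, assembled from the \emph{exact} local minimizers $\widehat{\bf w}_\lambda^{(n)}:={\bf w}_\lambda(\widetilde{\bf x}^{(n)})$ (these are well defined and unique: the local feasible set is a nonempty affine subspace since ${\bf A}_{\lambda,R}$ has full row rank by \eqref{assump:A3}, and $F$ is strongly convex by \Cref{assump:J}). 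Then $\|\widehat{\bf x}^{(n+1)}-{\bf x}^*\|_\infty\le\delta_R\|\widetilde{\bf x}^{(n)}-{\bf x}^*\|_\infty$, so by the triangle inequality the whole task is to bound $\|\widetilde{\bf x}^{(n+1)}-\widehat{\bf x}^{(n+1)}\|_\infty$.

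Because the governing regions $\Dl$ are pairwise disjoint and cover $V$ by \eqref{governingvertices.def}, identities \eqref{eq:tldxn+1} and \eqref{DACwithoutinequality.defd} give
\begin{equation*}
  \big\|\widetilde{\bf x}^{(n+1)}-\widehat{\bf x}^{(n+1)}\big\|_\infty=\max_{\lambda\in\Lambda}\ \max_{i\in\Dl}\big|\big(\widetilde{\bf w}_\lambda^{(n)}-\widehat{\bf w}_\lambda^{(n)}\big)_i\big|\le\max_{\lambda\in\Lambda}\big\|\widetilde{\bf w}_\lambda^{(n)}-\widehat{\bf w}_\lambda^{(n)}\big\|_\infty,
\end{equation*}
so it suffices to bound $\|\widetilde{\bf w}_\lambda^{(n)}-\widehat{\bf w}_\lambda^{(n)}\|_\infty$ uniformly in $\lambda$ and $n$. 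By \eqref{inexactmaintheorem1.pf.eq2}, $(\widetilde{\bf w}_\lambda^{(n)},\widetilde{\bf v}_\lambda^{(n)})$ is an exact KKT pair for the local problem with the gradient perturbed by $\vtheta_\lambda^{(n)}$ and the data ${\bf b}$ perturbed by $\veta_\lambda^{(n)}$ (phrasing the feasibility violation as a data perturbation is what makes this legitimate, since $\widetilde{\bf w}_\lambda^{(n)}$ need not be feasible for the unperturbed constraint). Subtracting the exact KKT conditions for $(\widehat{\bf w}_\lambda^{(n)},\widehat{\bf v}_\lambda^{(n)})$ and invoking the factorization $\nabla F({\bf x})-\nabla F({\bf y})={\bf J}({\bf x},{\bf y})({\bf x}-{\bf y})$ from \eqref{eq:gradient_difference} yields the linear saddle-point system
\begin{equation*}
  {\bf M}_\lambda^{(n)}\begin{bmatrix}\widetilde{\bf w}_\lambda^{(n)}-\widehat{\bf w}_\lambda^{(n)}\\ \widetilde{\bf v}_\lambda^{(n)}-\widehat{\bf v}_\lambda^{(n)}\end{bmatrix}=\begin{bmatrix}\vtheta_\lambda^{(n)}\\ \veta_\lambda^{(n)}\end{bmatrix},\qquad{\bf M}_\lambda^{(n)}=\begin{bmatrix}{\bf H}_\lambda^{(n)}&{\bf A}_{\lambda,R}^T\\ {\bf A}_{\lambda,R}&{\bf 0}\end{bmatrix},
\end{equation*}
where ${\bf H}_\lambda^{(n)}=\chi_{\DlR}{\bf J}(\cdot,\cdot)\chi^*_{\DlR}$ is a principal submatrix of ${\bf J}(\cdot,\cdot)$ and the right-hand side has $\ell^\infty$ norm at most $\epsilon_n$.

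It then remains to bound $\|({\bf M}_\lambda^{(n)})^{-1}\|_{\infty\to\infty}$ uniformly. Being a principal submatrix, ${\bf H}_\lambda^{(n)}$ inherits $c_1{\bf I}\preceq{\bf H}_\lambda^{(n)}\preceq L_1{\bf I}$ from \Cref{assump:J}, while ${\bf A}_{\lambda,R}{\bf A}_{\lambda,R}^T\succeq c_2^2{\bf I}$ by \eqref{assump:A3}; standard saddle-point estimates give a lower bound on the smallest singular value of ${\bf M}_\lambda^{(n)}$ in terms of $c_1,L_1,c_2$ only, hence $\|({\bf M}_\lambda^{(n)})^{-1}\|_{2\to2}$ is uniformly bounded. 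Moreover ${\bf H}_\lambda^{(n)}$ and ${\bf A}_{\lambda,R}$ are banded in the geodesic metric (geodesic-widths at most $2m$, by \Cref{assump:J} and \eqref{assump:A2}), so ${\bf M}_\lambda^{(n)}$ is an invertible banded matrix and its inverse has exponential off-diagonal decay; summing that decay against the polynomial growth \eqref{PolynomialGrowth} of $\cG$ --- the same mechanism that produces the constant $\delta_R$ in \eqref{maintheorem1.thm.eq1}, cf.\ \cite{Sun2014} --- bounds $\|({\bf M}_\lambda^{(n)})^{-1}\|_{\infty\to\infty}$ by a constant depending only on $c_1,L_1,c_2,m,d(\cG),D_1(\cG)$, in particular independently of $\lambda$, $n$, and the size of $\DlR$. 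We expect this to be precisely the constant recorded in the statement as $2/\sqrt L$. Consequently $\|\widetilde{\bf w}_\lambda^{(n)}-\widehat{\bf w}_\lambda^{(n)}\|_\infty\le\tfrac{2}{\sqrt L}\epsilon_n$, the triangle inequality produces the one-step recursion $\|\widetilde{\bf x}^{(n+1)}-{\bf x}^*\|_\infty\le\delta_R\|\widetilde{\bf x}^{(n)}-{\bf x}^*\|_\infty+\tfrac{2}{\sqrt L}\epsilon_n$, and unrolling it from $\widetilde{\bf x}^{(0)}$ yields a geometric-sum bound of the shape \eqref{inexactmaintheorem1.thm.eq1} (up to the routine bookkeeping of the exponents in the sum).

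The step I expect to be the main obstacle is this uniform conditioning of the local KKT operators: one must verify carefully that the local Hessian block genuinely inherits the two-sided bound $c_1{\bf I}\preceq{\bf H}_\lambda^{(n)}\preceq L_1{\bf I}$ from the global \Cref{assump:J} through the $m$-locality of the $f_i$, and --- more delicately --- that the inverse of the \emph{indefinite} saddle-point matrix ${\bf M}_\lambda^{(n)}$ enjoys an $\ell^\infty\to\ell^\infty$ operator-norm bound that does not deteriorate as the extended neighborhood $\DlR$, hence the parameter $R$, grows. Once this localization and conditioning estimate is in hand, the remainder is the exact contraction of \Cref{maintheorem1.thm} combined with an elementary geometric-series argument.
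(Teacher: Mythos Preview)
Your proposal is correct and uses the same essential ingredients as the paper --- the uniform invertibility and exponential off-diagonal decay of the local saddle-point matrices ${\bf M}_\lambda^{(n)}$ (your notation; the paper's $\widetilde{\bf K}_\lambda^{(n)}$), combined with the polynomial growth of $\cG$ to pass from entrywise decay to an $\ell^\infty$ operator bound. The organization differs slightly: you introduce the auxiliary exact iterate $\widehat{\bf x}^{(n+1)}=T\widetilde{\bf x}^{(n)}$ and split via the triangle inequality, reusing the one-step contraction of \Cref{maintheorem1.thm} as a black box and then bounding $\|\widetilde{\bf w}_\lambda^{(n)}-\widehat{\bf w}_\lambda^{(n)}\|_\infty$ by a separate KKT stability argument; the paper instead compares $\widetilde{\bf w}_\lambda^{(n)}$ directly to $\chi_{\DlR}{\bf x}^*$ in a single linear system whose right-hand side carries both the exterior error ${\bf I}_{\DlRc}(\widetilde{\bf x}^{(n)}-{\bf x}^*)$ and the residuals $(\vtheta_\lambda^{(n)},\veta_\lambda^{(n)})$, arriving at the one-step recursion $\|\widetilde{\bf x}^{(n+1)}-{\bf x}^*\|_\infty\le\delta_R\|\widetilde{\bf x}^{(n)}-{\bf x}^*\|_\infty+\tfrac{2}{\sqrt L}\delta_R\epsilon_n$ without an intermediate point. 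Your decomposition is arguably cleaner and more modular; the paper's direct route picks up the extra factor $\delta_R$ on the perturbation term and hence matches the exponent $(\delta_R)^{n-m}$ in \eqref{inexactmaintheorem1.thm.eq1} exactly, whereas your recursion unrolls to $\sum_{m=0}^{n-1}(\delta_R)^{n-1-m}\epsilon_m$ --- the ``routine bookkeeping'' you flag is a genuine factor-of-$\delta_R^{-1}$ discrepancy from the stated bound, harmless for the qualitative conclusion but worth noting.
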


\subsection{Constrained optimization problems with inequality constraints}\label{sec:inequality}

 In this section, we employ the log-barrier method \cite{YuriiNesterov1994} to solve  the general constrained optimization problem \eqref{convexoptimizationgeneral.def} with inequality constraints. 
In particular, we will solve the following equality constrained problem instead:
\begin{align}\label{eq:globalopt_logbarrier}
  {\rm argmin}_{{\bf x}\in  {\mathbb R}^S} F_{t}({\bf x}) := F({\bf x}) + G_t({\bf x}) \ \  \st\ \  {\bf A} {\bf x} = {\bf b},
\end{align}
where $G_t({\bf x}):=t^{-1} \sum_{l\in U} - \log(-g_l({\bf x}))$.
It is well known \cite{YuriiNesterov1994} that the solution ${\bf x}^*_t$ of the log-barrier problem \eqref{eq:globalopt_logbarrier} approximates the solution ${\bf x}^*$ of the inequality constrained problem \eqref{convexoptimizationgeneral.def} when $t>0$ is sufficient large,
\begin{align*}
  F({\bf x}^*_t) - F({\bf x}^*) \le  N t^{-1}, 
\end{align*}
where $N$ is the order of the underlying graph ${\mathcal G}$ of  the general constrained optimization problem \eqref{convexoptimizationgeneral.def}.
Then we employ the DAC algorithm proposed in Section \ref{sec:equality} to solve the log-barrier problem \eqref{eq:globalopt_logbarrier}. Specifically, we apply \eqref{DACwithoutinequality.defb} on the new objective function $F_{t}({\bf x})$ in \eqref{eq:globalopt_logbarrier} and  solve the following local optimization problem for each $\lambda\in \Lambda$,
\begin{align*}
  {\bf w}_{t,\lambda}^{(n)} =
  \begin{dcases}
    \arg\min_{{\bf u}} 
    F_t (\chi^*_{\DlR} {\bf u} + \mI_{V\backslash\DlR} {\bf x}_t^{(n)}) \\
     \text{s.t. }   \chi_{\Gl} {\bf A} (\chi^*_{\DlR} {\bf u} + \mI_{V\backslash\DlR} {\bf x}_t^{(n)}) = \chi_{\Gl}{\bf b}
  \end{dcases}
\end{align*}
and combine those local solutions to obtain the next updated value
\begin{align}\label{eq:xt_n+1}
  {\bf x}_t^{(n+1)} = \sum_{\lambda\in \Lambda} \mI_{\Dl} \chi^*_{\DlR} {\bf w}_{t,\lambda}^{(n)}.
\end{align}
For the above DAC algorithm, we have a similar approximation error estimate as in  Theorem  \ref{maintheorem1.thm}, see Section \ref{thm:logbarrier.pfsection} for detailed proof. 

\begin{theorem}\label{thm:logbarrier}
  Let  $\cG=(V, E)$ be a simple graph of order $N$ that has the  polynomial growth property .
  Consider the log-barrier problem \eqref{eq:globalopt_logbarrier} and denote its unique minimizer  by ${\bf x}_t^*$.
  Suppose  Assumptions  \ref{assump:f},  \ref{assump:J}, \ref{fusioncenter.assump}, \ref{assum:g_banded} and \ref{assump:A} hold.
  Set
    \begin{equation}\label{thm:logbarrier.eq2}
    \kappa_t=  \Big(\frac{c_1+\|{\bf A}\|}{c_1}\Big)^2 (L_1 + M_t +\|{\bf A}\|) \max\Big( \frac{1}{c_1}, \frac{L_1 + M_t}{c_2^2}\Big)
  \end{equation}
and \begin{equation}\label{thm:logbarrier.eq1}
    \delta_{R,t}= D_1({\mathcal G}) d!
    \Big(\frac{1}{4m}\ln \Big(\frac{\kappa_t^2-1}{\kappa_t^2+1}\Big)\Big)^{-d} (R+2)^{d({\mathcal G})} \Big(\frac{\kappa_t^2-1}{\kappa_t^2+1}\Big)^{R/(4m)},
  \end{equation}
  where $d:=d(\cG)$ and $D_1(\cG)$ are Beurling dimension and density of the graph ${\mathcal G}$ respectively,
  $c_1, c_2, L_1$ are constants in Assumptions \ref{assump:J} and \ref{assump:A}, $M_t$ is the upper bound of $\nabla^2 G_t$ on a sublevel set $\{{\bf x}\in \bR^{N}: F({\bf x}) + G_t({\bf x})\le F({\bf x}_t^{(0)}) + G_t({\bf x}_t^{(0)}) \}$ for an initial feasible point ${\bf x}_t^{(0)}$.
  Let $\{{\bf x}_t^{(n)}\}$ be the sequence generated in the iterative algorithm \eqref{eq:xt_n+1}. If the parameter $R$ is chosen such that 
  $$\delta_{R, t}<1, $$ then $\{{\bf x}_t^{(n)}\}$ converges to ${\bf x}_t^*$ exponentially in $\ell^p, 1\le p\le \infty$ with convergence rate $\delta_R$:
  \begin{equation*}
    \|{\bf x}_t^{(n)} - {\bf x}_t^*\|_p \le (\delta_{R, t})^n \|{\bf x}_t^{(0)} - {\bf x}_t^*\|_p, \quad n\ge 0.
  \end{equation*}

\end{theorem}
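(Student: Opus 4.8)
The plan is to observe that the DAC iteration \eqref{eq:xt_n+1} for the log-barrier problem \eqref{eq:globalopt_logbarrier} is exactly the DAC iteration \eqref{DACwithoutinequality.defb}--\eqref{DACwithoutinequality.defd} applied to the modified objective $F_t = F + G_t$ under the unchanged linear constraint ${\bf A}{\bf x}={\bf b}$, and then to invoke Theorem \ref{maintheorem1.thm} with $F$ replaced by $F_t$. The work thus reduces to checking that $F_t$ satisfies Assumptions \ref{assump:f} and \ref{assump:J} with $c_1$ unchanged and $L_1$ replaced by $L_1+M_t$; granting this, the constant $\kappa$ in \eqref{kappa.def} becomes $\kappa_t$ in \eqref{thm:logbarrier.eq2}, the rate $\delta_R$ becomes $\delta_{R,t}$ in \eqref{thm:logbarrier.eq1}, and the conclusion of Theorem \ref{maintheorem1.thm} is exactly the claimed estimate.

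First I would verify locality. By Assumption \ref{assum:g_banded} each $g_l$ is convex, twice continuously differentiable, and depends only on $x_j, j\in B(l,m)$; hence ${\bf x}\mapsto -\log(-g_l({\bf x}))$ is convex, $C^2$ on the feasible set, and depends on the same variables, so $G_t=t^{-1}\sum_{l\in U}-\log(-g_l)$ and $F_t=F+G_t$ are again sums of local functions centered at vertices of $\cG$ with neighboring radius $m$. This gives Assumption \ref{assump:f} for $F_t$, and in particular $\omega(\nabla^2 G_t)\le 2m$. Next I would establish the smoothness and strong-convexity bounds on the sublevel set $S_t:=\{{\bf x}\in\bR^N: g_l({\bf x})<0\ (l\in U),\ F_t({\bf x})\le F_t({\bf x}_t^{(0)})\}$, which is convex because $F_t$ is convex and the feasible set is convex. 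On $S_t$ one has $0\preceq \nabla^2 G_t({\bf x})\preceq M_t\mI$ by the definition of $M_t$, while $c_1\mI\preceq \nabla^2 F({\bf x})\preceq L_1\mI$ everywhere; setting ${\bf J}_t({\bf x},{\bf y}):=\int_0^1\nabla^2 F_t({\bf y}+s({\bf x}-{\bf y}))\,ds$ for ${\bf x},{\bf y}\in S_t$ (the segment stays in $S_t$) then yields $\nabla F_t({\bf x})-\nabla F_t({\bf y})={\bf J}_t({\bf x},{\bf y})({\bf x}-{\bf y})$, $\omega({\bf J}_t)\le 2m$, and $c_1\mI\preceq {\bf J}_t\preceq (L_1+M_t)\mI$, i.e. Assumption \ref{assump:J} for $F_t$ with $L_1$ replaced by $L_1+M_t$. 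Since ${\bf A}$ and its local-stability constant $c_2$ are untouched, Assumption \ref{assump:A} carries over verbatim, and the polynomial growth property is assumed directly.

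The main obstacle is the mismatch between Theorem \ref{maintheorem1.thm}, stated for objectives defined on all of $\bR^N$, and $F_t$, which lives only on the open feasible set, so the steps above only provide the required structure on $S_t$. I would close this gap by an induction showing every iterate ${\bf x}_t^{(n)}$ stays in $S_t$: for each $\lambda$, feeding ${\bf u}=\chi_{\DlR}{\bf x}_t^{(n)}$ into the local problem shows its optimal value is at most $F_t({\bf x}_t^{(n)})$, so the local extension $\chi^*_{\DlR}{\bf w}_{t,\lambda}^{(n)}+\mI_{V\setminus\DlR}{\bf x}_t^{(n)}$ lies in $S_t$; moreover, because $\rho(\Dl,V\setminus\DlR)>R\ge m$, every $g_l$ with $l\in \Dl\cap U$ depends only on variables inside $\DlR$, so its sign is controlled by that local solve. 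Feeding these observations into the error-contraction estimate underlying Theorem \ref{maintheorem1.thm} — which only uses the Hessian bounds along segments joining ${\bf x}_t^{(n)}$ to the local optima and to ${\bf x}_t^*$, all inside $S_t$ — gives ${\bf x}_t^{(n+1)}\in S_t$ and completes the induction; the conclusion of Theorem \ref{maintheorem1.thm} then applies to $F_t$ and yields $\|{\bf x}_t^{(n)}-{\bf x}_t^*\|_p\le(\delta_{R,t})^n\|{\bf x}_t^{(0)}-{\bf x}_t^*\|_p$.

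I expect the delicate point to be precisely that the additive recombination \eqref{eq:xt_n+1} need not leave the sublevel set but this has to be argued, since a constraint $g_l$ straddling two governing regions mixes components updated by different fusion centers. Should a clean descent-type invariant be unavailable, an alternative is to run the contraction only on the portion of $S_t$ where it is valid and then note that, once $R$ is large enough that $\delta_{R,t}<1$, the ball $\{{\bf x}:\|{\bf x}-{\bf x}_t^*\|_p\le\|{\bf x}_t^{(0)}-{\bf x}_t^*\|_p\}$ around the interior minimizer ${\bf x}_t^*$ already sits inside $S_t$, so the iteration never escapes and the contraction propagates for all $n$.
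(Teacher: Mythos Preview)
Your approach — reduce to Theorem \ref{maintheorem1.thm} applied to $F_t=F+G_t$ with $L_1$ replaced by $L_1+M_t$, so that $\kappa$ becomes $\kappa_t$ and $\delta_R$ becomes $\delta_{R,t}$ — is exactly the paper's, whose entire proof is the one-line observation that the Hessian of the log-barrier objective is bounded by $L_1+M_t$ and hence Theorem \ref{maintheorem1.thm} applies. Your verification of locality and of the Hessian bounds on the sublevel set, and your discussion of the domain issue (ensuring the iterates remain where $M_t$ is valid), already go further than the paper, which does not address these points at all.
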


\section{Numerical simulations}\label{numericalexperiments.section}

In this section, we evaluate the performance of the proposed DAC algorithm on several constrained optimization problems, that have been widely used in various applications, such as portfolio optimization  in finance \cite{Markowitz1952}, multinomial logistic regression with identifiability constraints to avoid redundancy in categorical models \cite{Agresti2002}, the direct current (DC) approximation of power flow optimization problem \cite{Wood2013}, neural network weight sharing via constraints \cite{Caruana1997}, and the entropy maximization problem \cite{Jaynes1957}. We will test our proposed algorithm on three popular types of loss functions: a \(L_{2}\) distance function, a quadratic function, and an entropy function. For the distance function and the quadratic function, we will include equality constraints in the model. For the cross entropy function, we will include both equality constraints and inequality constraints.

For all the simulations in this section, we consider a random geometric graph $\cG=(V, E)$ with $N$ vertices uniformly distributed in the unit square $[0,1]^2$, where two vertices are connected by an edge if their Euclidean distance is less than a given threshold $\tau$. Here we set $\tau=\sqrt{3N^{-1}\log N}$ to ensure that the random geometric graph is connected with high probability \cite{Penrose2003, Emirov2022}. 

We will use the algorithm introduced in \cite{Emirov2022} to find the fusion centers $\Lambda$.  In particular,  we randomly select a vertex $i\in V$ and add it to the fusion center set $\Lambda$. We choose \(R=1\) and then remove all vertices in its $2R$-neighborhood $B(i, 2R)$ from $V$. We repeat this process until all vertices in $V$ are removed. The selected fusion centers $\Lambda$ satisfy  Assumption \ref{governingvertices.def}.

For the vertices set \(W\) with constraints, we randomly select 10\% of the vertices in $V$ and combine them with the fusion centers \(\Lambda\) to get the set \(W\).

\subsection{\(L_{2}\) distance loss}
In our first simulation, we consider a simple \(L_{2}\) distance function as the loss function, constrained by a set of linear equations. In particular, we consider the following optimization problem,
$$
\min_{{\bf x}} \frac12\|{\bf x}-{\bf z}\|^2 \ \text{ subject to } \ {\bf Ax }={\bf b}.
$$
We point out that the above optimization problem can be interpreted as an orthogonal projection problem onto the affine set \cite{Plesnik2007}. With the full rank assumption on the matrix ${\bf A}$, the solution of the above orthogonal projection problem is given by
$ [{\bf I} - {\bf A}^T ({\bf A}{\bf A}^T)^{-1}{\bf A}]{\bf z}+ {\bf A}^T ({\bf A}{\bf A}^T)^{-1}{\bf b}$. We will use this closed-form solution to evaluate the performance of our proposed DAC algorithm.

In our simulation,  we first compute the graph Laplacian matrix ${\bf L}_{\mathcal{G}}$ of the random geometric graph $\mathcal{G}=(V, E)$, and then we take ${\bf A}=\chi_{W}{\bf L}_{\mathcal{G}}$ and ${\bf b}={\bf 0}$, where $\chi_{W}$ is a truncation operator on $W$. The constraint $\chi_{W}{\bf L}_{\mathcal{G}}{\bf x} = {\bf 0}$ in our optimization problem can be interpreted as an averaging equality constraint where each value $x_i$, $i\in W$ should be the average of the neighboring variables. That is, our optimization problem has the following form
$$\min_{{\bf x}} \frac12\|{\bf x}-{\bf z}\|^2 \quad \text{ subject to } \chi_{W}{\bf L}_{\mathcal{G}}{\bf x} = {\bf 0}
$$
where the local objective function becomes $f_i({\bf x} ) = \sum_{j\in {\mathcal N}_i} \frac{1}{2 d_j}
(x_j-z_j)^2 $ for $i\in V$ and $m=1$, where ${\mathcal N}_i$ is the set of all vertices $j$ with $(i,j)$ being an edge and $d_i$
is the cardinality of ${\mathcal N}_i$ (also known as the degree of the vertex $i$). The vector ${\bf z}$ is randomly selected with each component uniformly distributed in $[0,1]$.

We apply the proposed DAC algorithm to solve the above orthogonal projection problem on the random geometric graph of order \(N=1024\)  and \(N=2048\). We compute the approximation error \(\lVert {\bf x}^{(n)} - {\bf x}^* \rVert_2\) at each iteration, where \({\bf x}^{(n)}\) is the solution obtained from the proposed DAC algorithm at the \(n\)-th iteration and \({\bf x}^*\) is the closed-form solution of the orthogonal projection problem. We run 100 trials and take the average of the approximation errors at each iteration to verify the robustness of the proposed DAC algorithm. We display the results in Figure \ref{fig:orthogonal_proj_plot}. This demonstrates that the proposed DAC algorithm converges to the optimal solution with an exponential rate.

\begin{figure}[t] 
  \begin{subfigure}[t]{0.48\textwidth}
    \includegraphics[width = \textwidth]{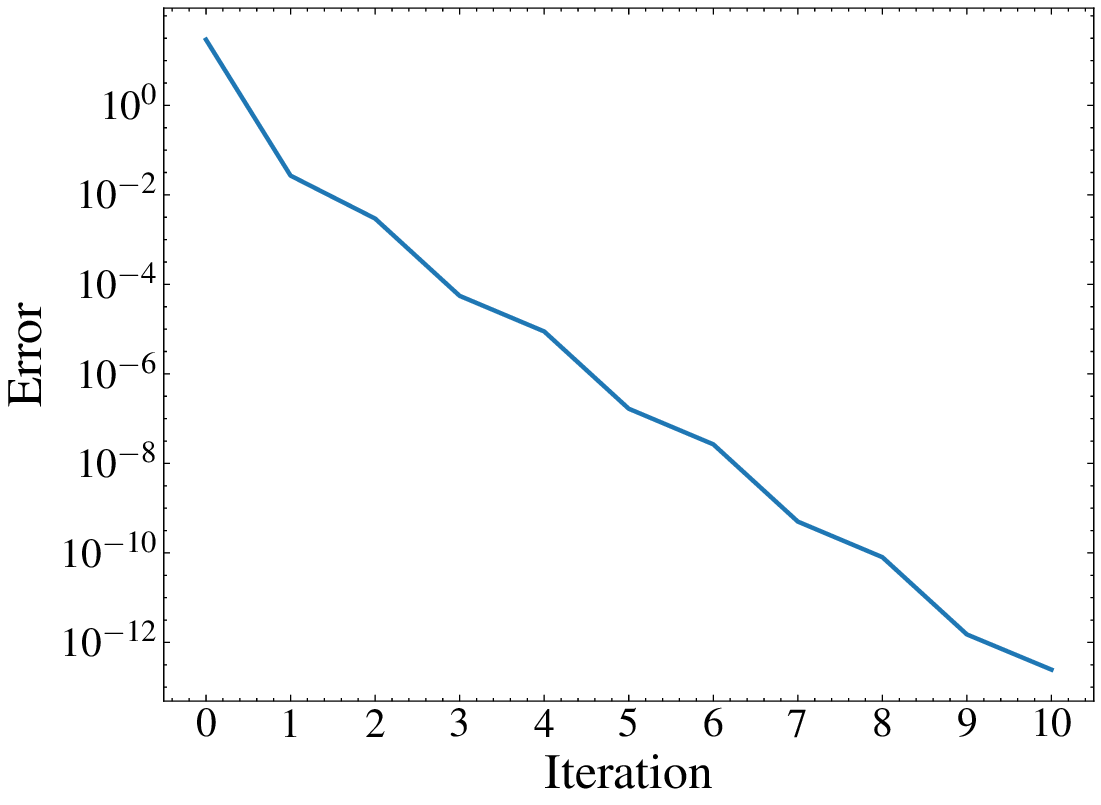}
    \label{fig:orthogonal_proj_plot_1024}
    \caption{\(N=1024\)}
  \end{subfigure}
  \begin{subfigure}[t]{0.48\textwidth}
    \includegraphics[width = \textwidth]{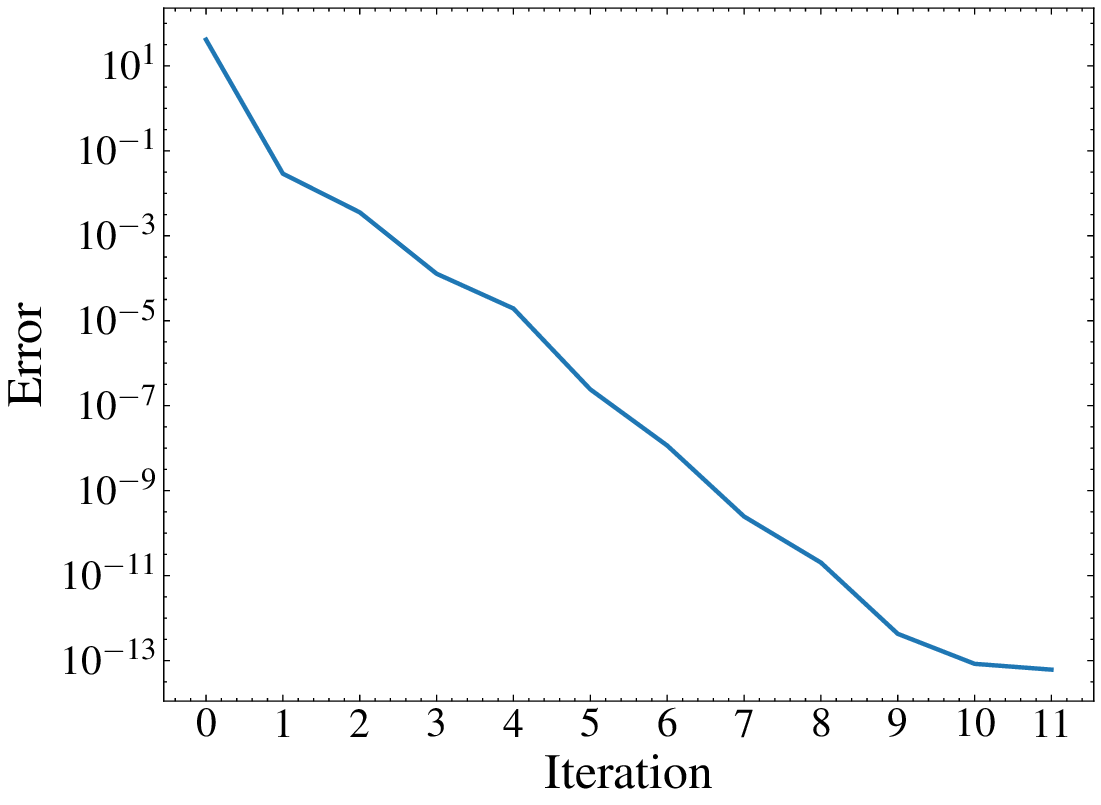}
    \label{fig:orthogonal_proj_plot_2048}
    \caption{\(N=2048\)}
  \end{subfigure}
  \caption{Approximation error \(\left\lVert {\bf x}^{(n)} - {\bf x}^* \right\rVert_2\) for the orthogonal projection problem.}
  \label{fig:orthogonal_proj_plot}
\end{figure}

\subsection{Quadratic loss}
In the second simulation, we consider a quadratic loss function as the objective function, constrained by a set of linear equations. In particular, we consider the following optimization problem,
$$
\min_{{\bf x}} \frac12 {\bf x}^T {\bf Q} {\bf x} + {\bf c}^T {\bf x} \quad \text{ subject to }\  {\bf Ax }={\bf b},
$$
where ${\bf Q}$ is a positive definite matrix and ${\bf c}$ is a given vector. Many application problems can be formulated as the above constrained quadratic optimization problem, including the portfolio optimization problem  in finance \cite{Markowitz1952}, the direct current approximation of power flow optimization problem \cite{Wood2013}, and traffic assignment problem \cite{sheffi1985urban}.

In our simulation, we set ${\bf Q}=4{\bf I}+{\bf L}_{\mathcal{G}}$, ${\bf c}$ as a random vector with each component uniformly distributed in $[0,1]$, ${\bf A}=\chi_{W}({\bf L}_{\mathcal{G}}^{2} + 2{\bf I})$ and ${\bf b}={\bf 0}$. 
Similar to the first simulation, we apply the proposed DAC algorithm to solve the above constrained quadratic optimization problem on the random geometric graph of order \(N=1024\)  and \(N=2048\). We compute the approximation error \(\lVert {\bf x}^{(n)} - {\bf x}^* \rVert_2\) at each iteration, where \({\bf x}^{(n)}\) is the solution obtained from the proposed DAC algorithm at the \(n\)-th iteration and \({\bf x}^*\) is the closed form solution of the constrained quadratic optimization problem obtained from the KKT conditions. Specifically, the closed form solution is given by
$$
{\bf x}^* = {\bf Q}^{-1} \big({\bf A}^T ({\bf A}{\bf Q}^{-1}{\bf A}^T)^{-1}({\bf b}+{\bf A}{\bf Q}^{-1}{\bf c}) - {\bf c}\big).
$$
Figure \ref{fig:quadratic_func_plot} presents the average approximation error over 100 trials, demonstrating that the proposed DAC algorithm converges  to the optimal solution with an exponential rate.

\begin{figure}[t] 
  \begin{subfigure}[t]{0.48\textwidth}
    \includegraphics[width = \textwidth]{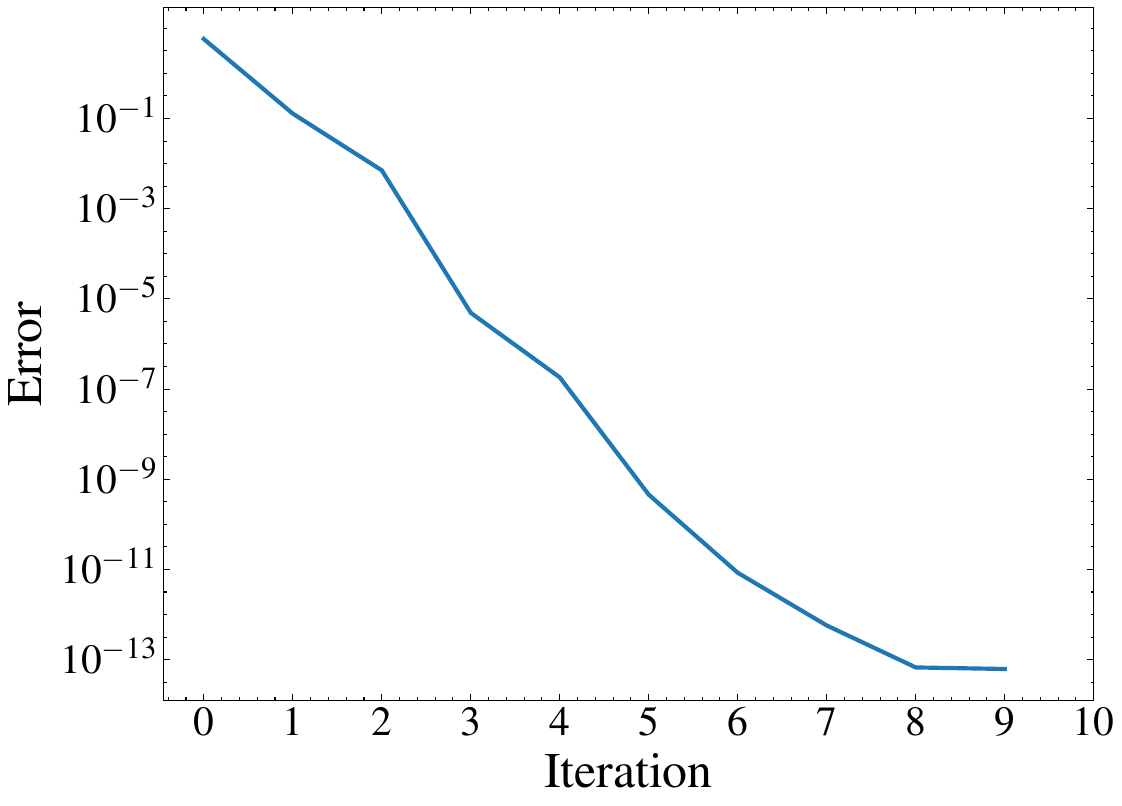}
    \label{fig:quadratic_func_plot_1024}
    \caption{\(N=1024\)}
  \end{subfigure}
  \begin{subfigure}[t]{0.48\textwidth}
    \includegraphics[width = \textwidth]{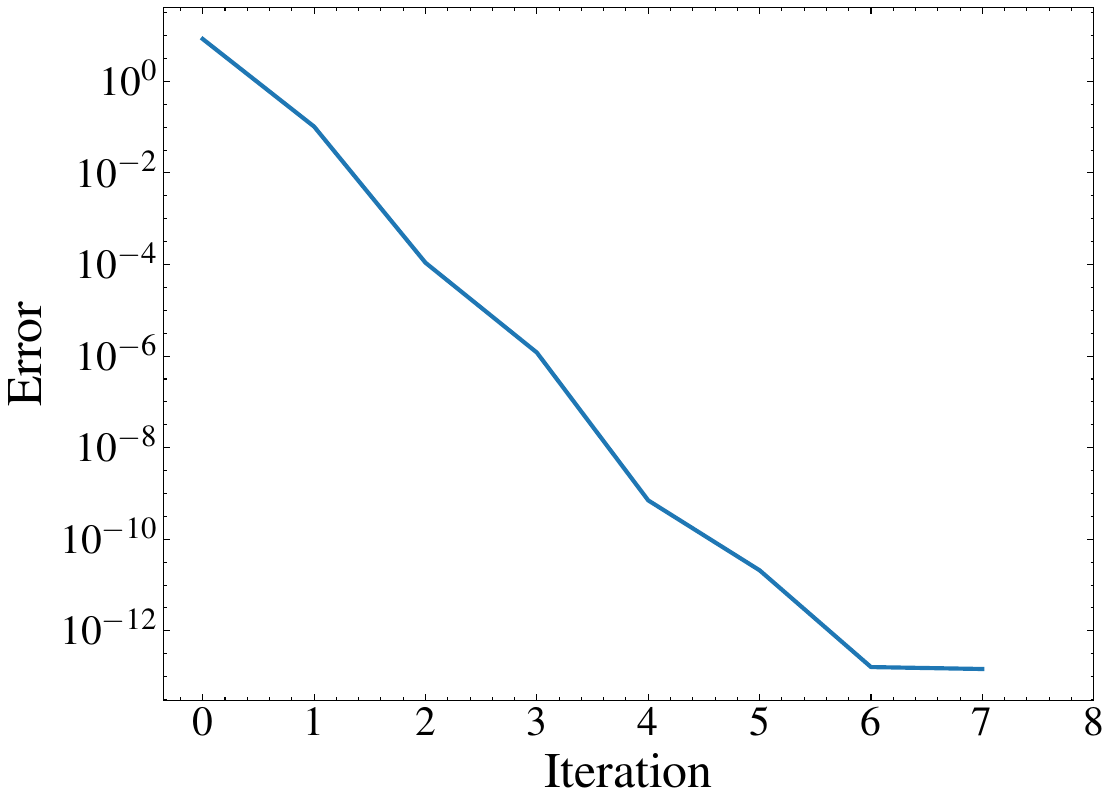}
    \label{fig:quadratic_func_plot_2048}
    \caption{\(N=2048\)}
  \end{subfigure}
  \caption{Approximation error \(\left\lVert {\bf x}^{(n)} - {\bf x}^* \right\rVert_2\) for the constrained quadratic optimization problem.}
  \label{fig:quadratic_func_plot}
\end{figure}

\subsection{Entropy loss} 
In the third simulation, we examine  the constrained optimization problem with the entropy loss function as the objective function.
In particular, we consider the following optimization problem,
\begin{align*}
  \min_{{\bf x}} \sum_{i\in V} x_{i} \log x_{i} \quad \text{subject to} \quad \mathbf{A} \mathbf{x} = \mathbf{b} \quad \text{and} \quad \mathbf{x} \geq \mathbf{0},
\end{align*}
where $\mathbf{A}$ is a given matrix, $\mathbf{b}$ is a given vector, and $\mathbf{x} \geq \mathbf{0}$ ensures nonnegativity of each component of 
 $\mathbf{x}=[x_i]_{i\in V}$. We point out that this formulation aligns with the entropy maximization framework in \cite{Jaynes1957}.

We set ${\bf A}=\chi_{W}(5{\bf L}_{\mathcal{G}} + {\bf I})$ and ${\bf b}$ as a random vector with each component uniformly distributed in $[0,1]$.   
We add a log barrier penalty term to the objective function with a parameter $t=100$ to convert the inequality constraint into an equality constraint. We then apply the proposed DAC algorithm to solve the optimization problem for the entropy loss function on the random geometric graph of order \(N=1024\) and \(N=2048\) . We compute the approximation error \(\lVert {\bf x}^{(n)} - {\bf x}^* \rVert_2\) at each iteration, where \({\bf x}^{(n)}\) is the solution obtained from the proposed DAC algorithm at the \(n\)-th iteration and \({\bf x}^*\) is the solution obtained from the centralized optimization solver using the interior-point method \cite{YuriiNesterov1994}. We display the average of the approximation error \(\lVert {\bf x}^{(n)} - {\bf x}^* \rVert_2\) of 100 trials in Figure \ref{fig:entropy_func_plot}. Similar to the previous two simulations, we observe that the proposed DAC algorithm converges to the optimal solution with an exponential rate.

\begin{figure}[t] 
  \begin{subfigure}[t]{0.48\textwidth}
    \includegraphics[width = \textwidth]{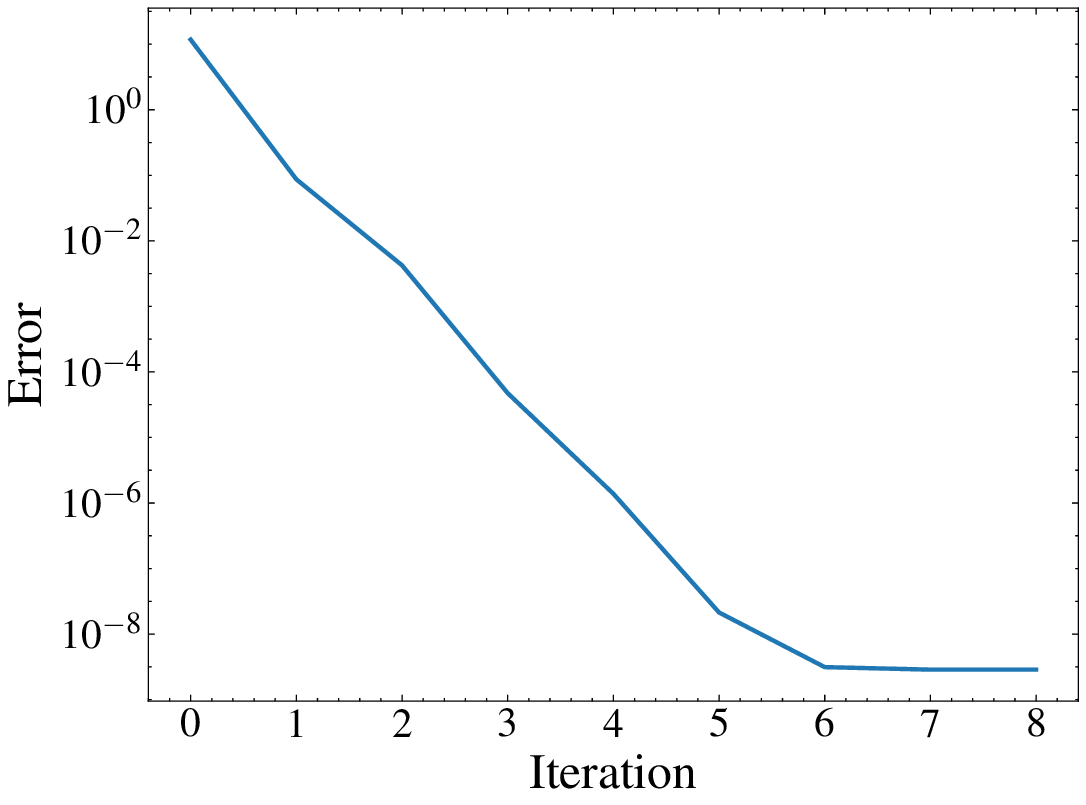}
    \label{fig:entropy_func_plot_1024}
    \caption{\(N=1024\)}
  \end{subfigure}
  \begin{subfigure}[t]{0.48\textwidth}
    \includegraphics[width = \textwidth]{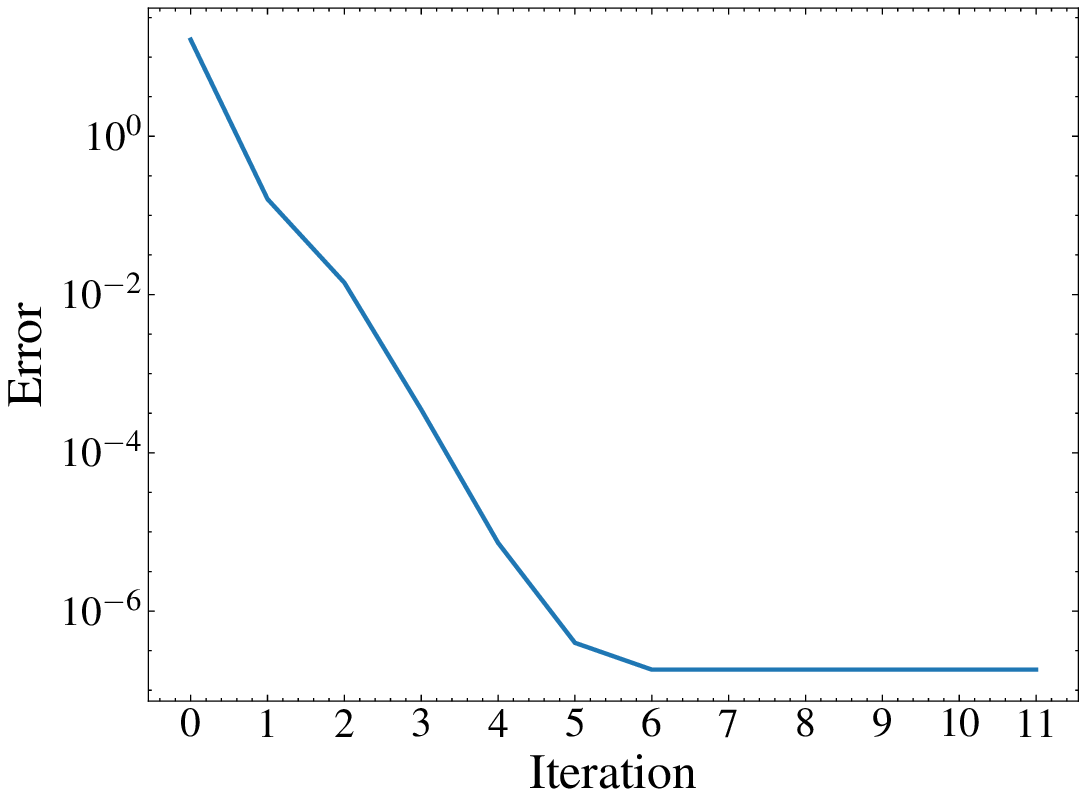}
    \label{fig:entropy_func_plot_2048}
    \caption{\(N=2048\)}
  \end{subfigure}
  \caption{Approximation error \(\lVert {\bf x}^{(n)} - {\bf x}^* \rVert_2\) for the entropy maximization problem.}
  \label{fig:entropy_func_plot}
\end{figure}

\section{Proofs}\label{proofs.section}

In this section, we collect proofs of Theorems \ref{maintheorem1.thm}, \ref{inexactmaintheorem1.thm} and \ref{thm:logbarrier}.

\subsection{Proof of Theorem \ref{maintheorem1.thm}}\label{maintheorem.thm.pfsection}
To prove  Theorem \ref{maintheorem1.thm}, we first establish  exponential off-diagonal decay property of the inverse of a  matrix with bounded geodesic width and block entries.

\begin{lemma}\label{maintheorem1.lem1}
  Let  matrices
  ${\bf H}_1=  [ h_1(i,j)]_{i,j\in V}$ and ${\bf H}_2=  [ h_2(i,j)]_{i,j\in V}$
  consist of  real entries $h_1(i,j), h_2(i,j)\in {\mathbb R}, i,j\in V$.
  If ${\bf H}_1$ is invertible and if ${\bf H}_1$ and ${\bf H}_2$
  have
  the geodesic width $\omega$, i.e.,
  \begin{equation}  \label{maintheorem1.lem1.eq1}
    h_1(i,j)=h_2(i,j)=0 \ \ {\rm for \ all}\ \  i, j\in V \ \ {\rm with}\ \ \rho(i,j)>\omega,
  \end{equation}
  then ${\bf H}_1^{-1}= [g_1(i,j)]_{i, j\in V}$
  and ${\bf H}_1^{-1}{\bf H}_2= [g_2(i,j)]_{i, j\in V}$
  have exponential off-diagonal decay,
  \begin{equation} \label{maintheorem1.lem1.eq2}
    |g_1(i,j)|\le \left\{\begin{array}{ll}
      \frac{(\kappa({\bf H}_1))^2}{\|{\bf H}_1\|} & { \rm if}\  j=i   \\
      \frac{ (\kappa({\bf H}_1))^2}{\|{\bf H}_1\|}
      \Big(\frac{(\kappa({\bf H}_1))^2-1}{(\kappa({\bf H}_1))^2+1}\Big)^{ \rho(i,j)/(2\omega)-1/2}
                                                  & {\rm if} \ j\ne i
    \end{array}\right.
  \end{equation}
  and
  \begin{equation} \label{maintheorem1.lem1.eq3}
    |g_2(i,j)|\le \left\{\begin{array}{ll}
      \frac{(\kappa({\bf H}_1))^2\|{\bf H}_2\|}{\|{\bf H}_1\|} & { \rm if}\  j=i    \\
      \frac{ (\kappa({\bf H}_1))^2 \|{\bf H}_2\|}{\|{\bf H}_1\|}
      \Big(\frac{(\kappa({\bf H}_1))^2-1}{(\kappa({\bf H}_1))^2+1}\Big)^{ \rho(i,j)/(2\omega)-1}
                                                               & {\rm if} \ j\ne i,
    \end{array}\right.
  \end{equation}
  where
  $\kappa({\bf H}_1)= \|{\bf H}_1^{-1}\| \|{\bf H}_1\|\ge 1$ is the condition number of the block  matrix ${\bf H}_1$.
\end{lemma}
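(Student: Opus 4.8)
The plan is to obtain the off-diagonal decay of $\mathbf{H}_1^{-1}$ by a standard polynomial-approximation-of-the-inverse argument adapted to the geodesic-width setting, and then deduce the bound for $\mathbf{H}_1^{-1}\mathbf{H}_2$ by multiplying. First I would normalize: replace $\mathbf{H}_1$ by $\mathbf{H}_1^T\mathbf{H}_1$, which is symmetric positive definite with spectrum contained in $[\|\mathbf{H}_1^{-1}\|^{-2},\|\mathbf{H}_1\|^2]$, so that its condition number is $\kappa(\mathbf{H}_1)^2$. The key structural observation is that $\mathbf{H}_1$ has geodesic width $\omega$, hence $\mathbf{H}_1^T\mathbf{H}_1$ has geodesic width at most $2\omega$, and more generally the $k$-th power $(\mathbf{H}_1^T\mathbf{H}_1)^k$ has geodesic width at most $2k\omega$: matrix products add geodesic widths because an entry $(i,j)$ of a product is nonzero only if there is an intermediate vertex within the respective widths, and the triangle inequality for $\rho$ controls the total reach. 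Consequently any polynomial $p$ of degree $k$ in $\mathbf{H}_1^T\mathbf{H}_1$ has geodesic width at most $2k\omega$, so its $(i,j)$ entry vanishes once $\rho(i,j)>2k\omega$.

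Next I would invoke the classical Chebyshev estimate: for a symmetric matrix with spectrum in $[a,b]$ with $0<a<b$, the function $1/t$ is approximated on $[a,b]$ by polynomials of degree $k$ with uniform error decaying like $\big(\frac{\sqrt{b/a}-1}{\sqrt{b/a}+1}\big)^{k}$ times $1/a$ (up to an absolute constant that can be absorbed). With $b/a=\kappa(\mathbf{H}_1)^2$, writing $q:=\frac{\kappa(\mathbf{H}_1)^2-1}{\kappa(\mathbf{H}_1)^2+1}$, one gets a polynomial $p_k$ of degree $k$ with $\|(\mathbf{H}_1^T\mathbf{H}_1)^{-1}-p_k(\mathbf{H}_1^T\mathbf{H}_1)\|\le C\,\|\mathbf{H}_1^{-1}\|^2\, q^{k}$. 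For a fixed pair $i\ne j$, choose $k$ to be the largest integer with $2k\omega<\rho(i,j)$, i.e. roughly $k=\lceil \rho(i,j)/(2\omega)\rceil-1$; then $p_k(\mathbf{H}_1^T\mathbf{H}_1)$ has zero $(i,j)$ entry, so the $(i,j)$ entry of $(\mathbf{H}_1^T\mathbf{H}_1)^{-1}$ is bounded in absolute value by the operator-norm error, giving a bound of the form $C\|\mathbf{H}_1^{-1}\|^2 q^{\rho(i,j)/(2\omega)-1}$. Finally, since $\mathbf{H}_1^{-1}=(\mathbf{H}_1^T\mathbf{H}_1)^{-1}\mathbf{H}_1^T$ and $\mathbf{H}_1^T$ has width $\omega$, entrywise matrix multiplication (again using additivity of reach and the uniform bounds $\|(\mathbf{H}_1^T\mathbf{H}_1)^{-1}\|=\|\mathbf{H}_1^{-1}\|^2$, $\|\mathbf{H}_1^T\|=\|\mathbf{H}_1\|$, together with the fact that the number of nonzero terms in each row can be folded into the density constant or handled by a weighted-norm argument) transfers the decay of $(\mathbf{H}_1^T\mathbf{H}_1)^{-1}$ to $\mathbf{H}_1^{-1}$, shifting the exponent to $\rho(i,j)/(2\omega)-1/2$; bookkeeping the constants yields exactly the stated $\frac{\kappa(\mathbf{H}_1)^2}{\|\mathbf{H}_1\|}$ prefactor and the diagonal case $j=i$ from the crude bound $|g_1(i,i)|\le\|\mathbf{H}_1^{-1}\|\le\kappa(\mathbf{H}_1)^2/\|\mathbf{H}_1\|$.

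For \eqref{maintheorem1.lem1.eq3}, I would write $\mathbf{H}_1^{-1}\mathbf{H}_2=[g_2(i,j)]$ with $g_2(i,j)=\sum_{\ell} g_1(i,\ell)h_2(\ell,j)$, where $h_2(\ell,j)=0$ unless $\rho(\ell,j)\le\omega$; hence in the sum $\rho(i,\ell)\ge\rho(i,j)-\omega$, so every surviving term carries the decay factor $q^{(\rho(i,j)-\omega)/(2\omega)-1/2}=q^{\rho(i,j)/(2\omega)-1}$, and $\sum_\ell |h_2(\ell,j)|\le C\|\mathbf{H}_2\|$ after the appropriate normalization, producing the claimed bound with the extra $\|\mathbf{H}_2\|$ factor. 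The main obstacle, and the point requiring the most care, is the passage from operator-norm estimates to entrywise estimates: a naive application loses a factor counting the number of vertices within a ball of radius $O(\rho(i,j))$, which is where the polynomial growth / density of the graph would enter if one is not careful. The clean way around this is to run the Chebyshev argument directly at the level of the single entry $\langle e_i,(\mathbf{H}_1^T\mathbf{H}_1)^{-1}e_j\rangle$ — the polynomial-approximation error bounds $|\langle e_i,((\mathbf{H}_1^T\mathbf{H}_1)^{-1}-p_k(\mathbf{H}_1^T\mathbf{H}_1))e_j\rangle|\le\|(\mathbf{H}_1^T\mathbf{H}_1)^{-1}-p_k(\mathbf{H}_1^T\mathbf{H}_1)\|$ with no dimensional loss — so that the only place additivity of geodesic width is used is to guarantee $\langle e_i,p_k(\mathbf{H}_1^T\mathbf{H}_1)e_j\rangle=0$; I would present it this way to keep the constants sharp.
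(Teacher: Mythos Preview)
Your approach is essentially the paper's: symmetrize via $\mathbf{H}_1^{-1}=(\mathbf{H}_1^T\mathbf{H}_1)^{-1}\mathbf{H}_1^T$, approximate the SPD inverse by polynomials whose geodesic width grows linearly in the degree, and bound the $(i,j)$ entry by the operator norm of the tail. Two points, however, need tightening. First, your rate computation is off: Chebyshev approximation of $1/t$ on $[a,b]$ with $b/a=\kappa(\mathbf{H}_1)^2$ gives the factor $(\kappa-1)/(\kappa+1)$, not $q=(\kappa^2-1)/(\kappa^2+1)$; the latter is exactly the Richardson/Neumann rate for the SPD matrix $\mathbf{H}_1^T\mathbf{H}_1$ (optimal relaxation of $I-\alpha\,\mathbf{H}_1^T\mathbf{H}_1$), which is what the paper uses. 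Chebyshev would of course still prove the lemma, but your identification of $q$ is inconsistent. Second, and more important, your transfer of decay from $(\mathbf{H}_1^T\mathbf{H}_1)^{-1}$ to $\mathbf{H}_1^{-1}$ and to $\mathbf{H}_1^{-1}\mathbf{H}_2$ by entrywise summation does pick up a ball-size factor (your own caveat), and the lemma's bound contains \emph{no} graph constant. The paper sidesteps this exactly the way you suggest in your last paragraph, but applied one level up: it writes the Neumann series directly for $\mathbf{H}_1^{-1}=\alpha\sum_{n\ge 0}(I-\alpha\,\mathbf{H}_1^T\mathbf{H}_1)^n\mathbf{H}_1^T$ (respectively with $\mathbf{H}_1^T\mathbf{H}_2$ at the end for $g_2$), notes that the $n$-th term has geodesic width $(2n{+}1)\omega$ (respectively $(2n{+}2)\omega$), and bounds $|\langle e_i,\cdot\, e_j\rangle|$ by the operator norm of the tail; no entrywise product, hence no density constant, and the exponents $\rho(i,j)/(2\omega)-1/2$ and $\rho(i,j)/(2\omega)-1$ drop out immediately.
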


The reader may refer to \cite{Emirov2022, Demko1984, Grochenig2006, Motee2017, Shin19, Sun2007, sunca11} for various off-diagonal decay properties for the inverse of infinite matrices.
For matrices with bounded geodesic width and scalar entries, the exponential off-diagonal decay property of  their inverses is discussed in \cite{Emirov2022}. We follow the  argument in \cite[Lemma 6.1]{Emirov2022} to prove Lemma \ref{maintheorem1.lem1} and we include a sketch of the proof for the completeness of this paper.

\begin{proof} [Proof of Lemma \ref{maintheorem1.lem1}]
For that case that $\omega=0$,  
 ${\bf H}_1^{-1}$ and ${\bf H}_1^{-1} {\bf H}_2$ are block diagonal matrices and
norm of  their diagonal entries are dominated by their operator norms. This proves
 \eqref{maintheorem1.lem1.eq2}  and \eqref{maintheorem1.lem1.eq3} with $\omega=0$.

Now we consider the case that $\omega\ne 0$.
 Then
  ${\bf H}_1^* {\bf H}_1$ is positive definite and
\begin{equation} \label{maintheorem1.lem1.pf.eq1}
\frac{\|{\bf H}_1\|^2}{ (\kappa({\bf H}_1))^2}
 {\bf I}_L\le {\bf H}_1^* {\bf H}_1\le \|{\bf H}_1\|^2 {\bf I}_L
\end{equation}
by the invertibility of ${\bf H}_1$.
Therefore
\begin{equation} \label{maintheorem1.lem1.pf.eq4}
\Big\|\Big( {\bf I}_L- \frac{2 (\kappa({\bf H}_1))^2}{\|{\bf H}_1\|^2 ((\kappa({\bf H}_1))^2+1)} {\bf H}_1^* {\bf H}_1\Big)^n \Big\|\le   \Big(\frac{(\kappa({\bf H}_1))^2-1}{(\kappa({\bf H}_1))^2+1}\Big)^n, \  n\ge 0,
\end{equation}
and
\begin{equation}  \label{maintheorem1.lem1.pf.eq2}
{\bf H}_1^{-1}= \frac{2 (\kappa({\bf H}_1))^2}{\|{\bf H}_1\|^2 ((\kappa({\bf H}_1))^2+1)}  \sum_{n=0}^\infty  \Big( {\bf I}_L- \frac{2 (\kappa({\bf H}_1))^2}{\|{\bf H}_1\|^2 ((\kappa({\bf H}_1))^2+1)}  {\bf H}_1^* {\bf H}_1\Big)^n {\bf H}_1^*.\end{equation}
By \eqref{maintheorem1.lem1.pf.eq4}  and  \eqref{maintheorem1.lem1.pf.eq2}, we have
\begin{eqnarray} \label{maintheorem1.lem1.pf.eq5}
|g_1(i,i)| &\hskip-0.08in  \le & \hskip-0.08in \frac{2 (\kappa({\bf H}_1))^2}{\|{\bf H}_1\|^2 ((\kappa({\bf H}))^2+1)}\nonumber\\
& & \times \sum_{n=0}^\infty \left\|\Big( {\bf I}_L- \frac{2 (\kappa({\bf H}_1))^2}{\|{\bf H}_1\|^2 ((\kappa({\bf H}_1))^2+1)}
 {\bf H}_1^* {\bf H}_1\Big)^n {\bf H}_1^*\right\|\nonumber\\
& \hskip-0.08in \le & \hskip-0.08in \frac{2 (\kappa({\bf H}_1))^2}{\|{\bf H}_1\| ((\kappa({\bf H}_1))^2+1)}
 \sum_{n=0}^\infty \Big(\frac{(\kappa({\bf H}_1))^2-1}{(\kappa({\bf H}_1))^2+1}\Big)^n=
\frac{ (\kappa({\bf H}_1))^2}{\|{\bf H}_1\|}\quad 
\end{eqnarray}
and
\begin{eqnarray} \label{maintheorem1.lem1.pf.eq6}
|g_2(i,i)| 
 & \hskip-0.08in  \le & \hskip-0.08in    \frac{2 (\kappa({\bf H}_1))^2\|{\bf H}_2\|}{\|{\bf H}_1\| ((\kappa({\bf H}_1))^2+1)}
 \sum_{n=0}^\infty \Big(\frac{(\kappa({\bf H}_1))^2-1}{(\kappa({\bf H}_1))^2+1}\Big)^n\nonumber\\
 & \hskip-0.08in  = & \hskip-0.08in 
\frac{ (\kappa({\bf H}_1))^2 \|{\bf H}_2\|}{\|{\bf H}_1\|},\ \ i\in V.
\end{eqnarray}

Let  $n_0(i,j)$ and $n_1(i,j)$ be the  smallest nonnegative integers such that
$(2n_0+1)\omega\ge \rho(i,j)$ and $(2n_1+2)\omega\ge \rho(i,j)$.
Observe that
\begin{equation} \label{maintheorem1.lem1.pf.eq3}
\omega\Big( \Big( {\bf I}_L- \frac{2 (\kappa({\bf H}_1))^2}{\|{\bf H}_1\|^2 ((\kappa({\bf H}_1))^2+1)}  {\bf H}_1^* {\bf H}_1\Big)^n \Big)\le 2n\omega, \ n\ge 0.
\end{equation}
Then for  distinct vertices $i,j\in V$, we obtain from
\eqref{maintheorem1.lem1.pf.eq4}, \eqref{maintheorem1.lem1.pf.eq2} and
\eqref{maintheorem1.lem1.pf.eq3} that
\begin{eqnarray}  \label{maintheorem1.lem1.pf.eq7}
|g_1(i,j)| 
& \hskip-0.08in \le &  \hskip-0.08in \frac{2 (\kappa({\bf H}_1))^2}{\|{\bf H}_1\| ((\kappa({\bf H}_1))^2+1)}
 \sum_{n=n_0(i,j)}^\infty \Big(\frac{(\kappa({\bf H}_1))^2-1}{(\kappa({\bf H}_1))^2+1}\Big)^n\nonumber\\
 & \hskip-0.08in = &  \hskip-0.08in
\frac{(\kappa({\bf H}_1))^2}{\|{\bf H}_1\|}
\Big(\frac{(\kappa({\bf H}_1))^2-1}{(\kappa({\bf H}_1))^2+1}\Big)^{n_0(i,j)}
\nonumber\\
& \hskip-0.08in \le &  \hskip-0.08in
\frac{ (\kappa({\bf H}_1))^2}{\|{\bf H}_1\|}
\Big(\frac{(\kappa({\bf H}_1))^2-1}{(\kappa({\bf H}_1))^2+1}\Big)^{ \rho(i,j)/(2\omega)-1/2},
\end{eqnarray}
and
\begin{eqnarray}  \label{maintheorem1.lem1.pf.eq8}
|g_2(i,j)| 
& \hskip-0.08in \le &  \hskip-0.08in  \frac{2 (\kappa({\bf H}_1))^2 \|{\bf H}_2\|}{\|{\bf H}_1\| ((\kappa({\bf H}_1))^2+1)}
 \sum_{n=n_1(i,j)}^\infty \Big(\frac{(\kappa({\bf H}_1))^2-1}{(\kappa({\bf H}_1))^2+1}\Big)^n
\nonumber\\
& \hskip-0.08in \le &  \hskip-0.08in \frac{ (\kappa({\bf H}_1))^2\|{\bf H}_2\|}{\|{\bf H}_1\|}
\Big(\frac{(\kappa({\bf H}_1))^2-1}{(\kappa({\bf H}_1))^2+1}\Big)^{ \rho(i,j)/(2\omega)-1}.
\end{eqnarray}
Therefore the conclusions \eqref{maintheorem1.lem1.eq2}  and \eqref{maintheorem1.lem1.eq3} with $\omega\ne 0$
follow from \eqref{maintheorem1.lem1.pf.eq5}, \eqref{maintheorem1.lem1.pf.eq6}, \eqref{maintheorem1.lem1.pf.eq7} and \eqref{maintheorem1.lem1.pf.eq8}.
\end{proof}

\medskip

For $x, y\in {\mathbb R}^N$ and $\lambda\in \Lambda$,  define
\begin{equation}\label{maintheorem1.lem2.eq1a}
  {\bf K}_{1,\lambda}(x, y)=\left[\begin{matrix}
    \chi_{_{\DlR}} {\bf J}(x, y) \chi_{_{\DlR}}^* & \chi_{_{\DlR}} {\bf A}^T  \chi_{_{\Gl}}^* \\
    \chi_{_{\Gl}} {\bf A} \chi_{_{\DlR}}^*                                                  & {\bf 0}
  \end{matrix}\right]
\end{equation}
and
\begin{equation} \label{maintheorem1.lem2.eq1b}
  {\bf K}_{2, \lambda}(x, y)= \left[
  \begin{matrix}
    \chi_{_{\DlR}} {\bf J}(x, y) \\
    \chi_{_{\Gl}} {\bf A}
  \end{matrix}\right],
\end{equation}
where ${\bf J}(x, y)$ is given in Assumption \ref{assump:J}.
In the following lemma, we show that  ${\bf K}_{1, \lambda}(x, y)$ and
${\bf K}_{2, \lambda}(x, y)$ are uniformly bounded, and   ${\bf K}_{1, \lambda}(x, y)$ has bounded inverse.

\begin{lemma} \label{maintheorem1.lem2} Let ${\bf J}$ and ${\bf A}$  be as in Theorem \ref{maintheorem1.thm},
  and
  ${\bf K}_{1, \lambda}(x, y)$  and  ${\bf K}_{2, \lambda}(x, y)$
  be as in \eqref{maintheorem1.lem2.eq1a} and \eqref{maintheorem1.lem2.eq1b}.
  Then
  \begin{equation}\label {maintheorem1.lem2.eq3}
    \|{\bf K}_{1, \lambda}(x, y)\|\le  L_1+  \|{\bf A}\|,
  \end{equation}
  \begin{equation}\label {maintheorem1.lem2.eq4}
    \|({\bf K}_{1, \lambda}(x, y))^{-1}\|\le   \Big(\frac{c_1+\|{\bf A}\|}{c_1}\Big)^2 \max\Big( \frac{1}{c_1}, \frac{L_1}{c_2^2}\Big),
  \end{equation}
  and
  \begin{equation}\label {maintheorem1.lem2.eq5}
    \|{\bf K}_{2,\lambda}(x, y)\|\le  L_1+  \|{\bf A}\|
  \end{equation}
  hold for all $x, y\in {\mathbb R}^N$ and $\lambda\in \Lambda$,
  where
  $c_1, c_2$ and $L_1$ are constants in \eqref{assump:A} and Assumption \ref{assump:J}.
\end{lemma}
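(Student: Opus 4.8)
Write $P_\lambda := \chi_{_{\DlR}}{\bf J}(x,y)\chi_{_{\DlR}}^*$ and $Q_\lambda := \chi_{_{\Gl}}{\bf A}\chi_{_{\DlR}}^*$, so that ${\bf K}_{1,\lambda}(x,y)=\left[\begin{smallmatrix} P_\lambda & Q_\lambda^T\\ Q_\lambda & {\bf 0}\end{smallmatrix}\right]$ is a saddle-point (KKT) matrix. Since $\chi_{_{\DlR}}$ is a partial isometry with $\chi_{_{\DlR}}\chi_{_{\DlR}}^*=\mI$, compressing $c_1\mI\preceq{\bf J}(x,y)\preceq L_1\mI$ from Assumption \ref{assump:J} gives $c_1\mI\preceq P_\lambda\preceq L_1\mI$; in particular $P_\lambda$ is symmetric positive definite with $\|P_\lambda\|\le L_1$ and $\|P_\lambda^{-1}\|\le 1/c_1$. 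Moreover $Q_\lambda$ is exactly the submatrix ${\bf A}_{\lambda,R}$ of ${\bf A}$, so $\|Q_\lambda\|\le\|{\bf A}\|$ and, by the local stability \eqref{assump:A3}, $Q_\lambda Q_\lambda^T\succeq c_2^2\mI$. The bound \eqref{maintheorem1.lem2.eq3} then follows from the splitting ${\bf K}_{1,\lambda}(x,y)=\operatorname{diag}(P_\lambda,{\bf 0})+\left[\begin{smallmatrix}{\bf 0}&Q_\lambda^T\\ Q_\lambda&{\bf 0}\end{smallmatrix}\right]$, the first term having norm $\|P_\lambda\|\le L_1$ and the second, being symmetric with singular values those of $Q_\lambda$, having norm $\|Q_\lambda\|\le\|{\bf A}\|$. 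For \eqref{maintheorem1.lem2.eq5} one simply notes, for $w\in{\mathbb R}^N$,
\[
\|{\bf K}_{2,\lambda}(x,y)w\|_2^2=\|\chi_{_{\DlR}}{\bf J}(x,y)w\|_2^2+\|\chi_{_{\Gl}}{\bf A}w\|_2^2\le (L_1^2+\|{\bf A}\|^2)\|w\|_2^2\le (L_1+\|{\bf A}\|)^2\|w\|_2^2.
\]

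\textbf{The inverse bound \eqref{maintheorem1.lem2.eq4}.} Since $P_\lambda\succ 0$ and $Q_\lambda$ has full row rank (because $Q_\lambda Q_\lambda^T\succ 0$), ${\bf K}_{1,\lambda}(x,y)$ is invertible and admits the block factorization, with Schur complement $S_\lambda:=Q_\lambda P_\lambda^{-1}Q_\lambda^T$,
\[
{\bf K}_{1,\lambda}(x,y)^{-1}=\begin{bmatrix}\mI&-P_\lambda^{-1}Q_\lambda^T\\ {\bf 0}&\mI\end{bmatrix}\begin{bmatrix}P_\lambda^{-1}&{\bf 0}\\ {\bf 0}&-S_\lambda^{-1}\end{bmatrix}\begin{bmatrix}\mI&{\bf 0}\\ -Q_\lambda P_\lambda^{-1}&\mI\end{bmatrix}.
\]
I will bound the three factors separately. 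The middle factor is block-diagonal with symmetric blocks, so its norm is $\max(\|P_\lambda^{-1}\|,\|S_\lambda^{-1}\|)$; from $P_\lambda^{-1}\succeq L_1^{-1}\mI$ and $Q_\lambda Q_\lambda^T\succeq c_2^2\mI$ one gets $S_\lambda\succeq L_1^{-1}Q_\lambda Q_\lambda^T\succeq (c_2^2/L_1)\mI$, hence $\|S_\lambda^{-1}\|\le L_1/c_2^2$ and the middle factor has norm $\le\max(1/c_1,\,L_1/c_2^2)$. Each outer factor has the form $\mI+(\text{nilpotent block})$, so its norm is at most $1+\|P_\lambda^{-1}Q_\lambda^T\|$ resp. $1+\|Q_\lambda P_\lambda^{-1}\|$, and both off-diagonal blocks are bounded by $\|P_\lambda^{-1}\|\,\|Q_\lambda\|\le\|{\bf A}\|/c_1$. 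Thus each outer factor has norm $\le 1+\|{\bf A}\|/c_1=(c_1+\|{\bf A}\|)/c_1$, and multiplying the three bounds yields exactly \eqref{maintheorem1.lem2.eq4}.

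\textbf{Where the care is needed.} The substantive point is not invertibility (standard for KKT matrices with positive definite $(1,1)$ block and full-rank constraint block) but producing the constant in the stated \emph{product} form. A direct block-wise triangle inequality applied to the closed-form expression for ${\bf K}_{1,\lambda}^{-1}$ leaves an extra additive term; the factored form above is what cleanly separates the conditioning of ${\bf J}$ and of ${\bf A}_{\lambda,R}$ (captured by the diagonal middle factor, whence the $\max(1/c_1,L_1/c_2^2)$) from the primal–dual coupling $P_\lambda^{-1}Q_\lambda^T$ (each triangular factor contributing the multiplicative $(c_1+\|{\bf A}\|)/c_1$). Everything else — the compression inequalities for $P_\lambda$, the identification $Q_\lambda={\bf A}_{\lambda,R}$, and the elementary estimate $\|\mI+N\|\le 1+\|N\|$ for a block-nilpotent $N$ — is routine and uses only Assumptions \ref{assump:J} and \ref{assump:A}.
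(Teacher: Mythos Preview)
Your proof is correct and follows essentially the same approach as the paper: both use the block $LDL^T$-type factorization of the KKT matrix with Schur complement $S_\lambda=Q_\lambda P_\lambda^{-1}Q_\lambda^T$, bound the triangular factors by $1+\|P_\lambda^{-1}\|\|Q_\lambda\|\le (c_1+\|{\bf A}\|)/c_1$ and the diagonal factor by $\max(1/c_1,L_1/c_2^2)$, and handle \eqref{maintheorem1.lem2.eq5} by the same direct estimate. The only cosmetic difference is that for \eqref{maintheorem1.lem2.eq3} the paper computes the supremum directly while you split ${\bf K}_{1,\lambda}$ into its diagonal and antidiagonal parts; both arguments are equally short.
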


\begin{proof}
  Set ${\bf B}_\lambda=\chi_{_{\DlR}} {\bf J}(x, y) \chi_{_{\DlR}}^*$ and
  ${\bf A}_\lambda=\chi_{_{\Gl}} {\bf A} \chi_{_{\DlR}}^*$. We first prove \eqref{maintheorem1.lem2.eq3}. By a direct computation, we have
  \begin{eqnarray*}
    \|{\bf K}_{1, \lambda}(x, y)\|^2  
    & \hskip-0.08in = & \hskip-0.08in  \sup_{\|{\bf u}_1\|_2^2 + \|{\bf u}_2\|_2^2=1} \left\|
    \left[\begin{matrix}
      {\bf B}_\lambda & {\bf A}_\lambda^T   \nonumber \\
      {\bf A}_\lambda
                      & {\bf 0}
    \end{matrix}\right]
    \left[\begin{matrix}
      {\bf u}_1 \\
      {\bf u}_2
    \end{matrix} \right] \right\|_2^2\\
    & \hskip-0.08in  = \hskip-0.08in  &\sup_{\|{\bf u}_1\|_2^2 + \|{\bf u}_2\|_2^2=1} \|{\bf B}_\lambda {\bf u}_1 +  {\bf A}_\lambda^T  {\bf u}_2\|_2^2 + \|{\bf A}_\lambda{\bf u}_1\|_2^2 \nonumber\\
    & \hskip-0.08in  = & \hskip-0.08in \sup_{\|{\bf u}_1\|_2^2 + \|{\bf u}_2\|_2^2=1} (L_1\|{\bf u}_1\|_2 + \|{\bf A}\| \|{\bf u}_2\|_2)^2 + \|{\bf A}\| ^2 \|{\bf u}_1\|_2^2
    \nonumber\\
    & \hskip-0.08in  \le  & \hskip-0.08in (L_1+\|{\bf A}\|)^2.
  \end{eqnarray*}

  We next show  that \eqref{maintheorem1.lem2.eq4}  holds. Set ${\bf C}_\lambda={\bf A}_\lambda ({\bf B}_\lambda)^{-1} {\bf A}_\lambda^T$.
  By Assumptions \ref{assump:J} and \ref{assump:A}, we obtain
  \begin{equation} \label{maintheorem1.lem2.pf.eq1}
    c_1 {\bf I}_{_{\DlR}} \preceq {\bf B}_\lambda \preceq L_1 {\bf I}_{_{\DlR}}
    \quad {\rm and} \quad
    \frac{c_2^2}{L_1} {\bf I}_{_{\Gl}} \preceq  {\bf C}_\lambda
    \preceq \frac{\|{\bf A}\|^2}{c_1} {\bf I}_{_{\Gl}}.
  \end{equation}
  Observe that
  \begin{eqnarray*}
    \big({\bf K}_{1,\lambda}(x, y)\big)^{-1}  &\hskip-0.08in   = & \hskip-0.08in
   \left[ \begin{matrix} {\bf I}_{_{\DlR}} & -  ({\bf B}_\lambda)^{-1}  {\bf A}_\lambda^T \\
                {\bf 0}
                                  & {\bf I}_{_{\Gl}}
    \end{matrix}\right]
  \left[  \begin{matrix} ({\bf B}_\lambda)^{-1} & {\bf 0}                   \\
                {\bf 0}
                                       & -  ({\bf C}_\lambda)^{-1}
    \end{matrix}\right]\nonumber\\
    & & \times
    \left[\begin{matrix} {\bf I}_{_{\DlR}} & {\bf 0}          \\
                -    {\bf A}_\lambda ({\bf B}_\lambda)^{-1}
                                  & {\bf I}_{_{\Gl}}
    \end{matrix}\right].
  \end{eqnarray*}
  This together with \eqref{maintheorem1.lem2.pf.eq1}
  and Assumption \ref{assump:A} proves \eqref{maintheorem1.lem2.eq4}.

  By direct computation, we get
  \begin{eqnarray*}
    \|{\bf K}_{2, \lambda}(x, y)\|^2  
    & \hskip-0.08in  = & \hskip-0.08in \sup_{\|{\bf u}\|_2=1} \|\chi_{_{\DlR}} {\bf J}(x, y) {\bf u}\|^2+ \|\chi_{_{\Gl}} {\bf A}{\bf u}\|^2
    \nonumber\\
    & \hskip-0.08in  = & \hskip-0.08in  L_1^2 + \|{\bf A}\|^2 \le (L_1+\|{\bf A}\|)^2,
  \end{eqnarray*}
  which implies \eqref{maintheorem1.lem2.eq5} immediately.
\end{proof}

We next show that $({\bf K}_{1, \lambda}(x, y))^{-1}$  and
$({\bf K}_{1, \lambda}(x, y))^{-1} {\bf K}_{2, \lambda}(x', y')$
have exponential off-diagonal decay for all $x, x', y, y'\in {\mathbb R}^N$.

\begin{lemma} \label{maintheorem1.lem3} Let $m, \kappa, {\bf J}$ and ${\bf A}$  be as in Theorem \eqref{maintheorem1.thm}, and let
  ${\bf K}_{1, \lambda}(x, y)$ and
  ${\bf K}_{2, \lambda}(x, y)$
  be as in \eqref{maintheorem1.lem2.eq1a} and \eqref{maintheorem1.lem2.eq1b}.
  For $x, x', y, y'\in {\mathbb R}^N$, write
  $$\big({\bf K}_{1, \lambda}(x, y)\big)^{-1}=
    \left[\begin{matrix} \big({ g}_{11}(i,j)\big)_{i,j\in {\DlR}}          & \big({ g}_{12}(i,j)\big)_{i\in {\DlR}, j\in {\Gl}} \\
                \big({ g}_{21}(i,j)\big)_{i\in {\Gl},j\in {\DlR}} & \big({g}_{22}(i,j)\big)_{i, j\in {\Gl}}
    \end{matrix}\right]
  $$
  and
  $$\big({\bf K}_{1, \lambda}(x, y)\big)^{-1}{\bf K}_{2, \lambda}(x', y')=
  \left[ \begin{matrix} \big({ g}_{13}(i,j)\big)_{i\in {\DlR}, j\in V} \\
      \big({ g}_{23}(i,j)\big)_{i\in {\Gl},j\in V}
    \end{matrix}\right].
  $$
  Then for $l, l'\in \{1,2\} $,
  \begin{equation} \label{maintheorem1.lem3.eq1}
    |g_{_{ll'}}(i,j)|\le \left\{\begin{array}{ll}
      \frac{\kappa^2}{L_1+  \|{\bf A}\|} & { \rm if}\  j=i   \\
      \frac{\kappa^2}{L_1+  \|{\bf A}\|}
      \Big(\frac{\kappa^2-1}{\kappa^2+1}\Big)^{ \rho(i,j)/(4m)-1/2}
                                         & {\rm if} \ j\ne i
    \end{array}\right.
  \end{equation}
  hold for vertices  $i, j$ in  the appropriate index sets being either ${\DlR}$ or $\Gl$,
  and for $(l, l')=(1, 3)$ or $(2, 3)$,
  \begin{equation} \label{maintheorem1.lem3.eq2}
    |g_{_{ll'}}(i,j)|\le \left\{\begin{array}{ll}
      \kappa^2 & { \rm if}\  j=i   \\
      \kappa^2
      \Big(\frac{\kappa^2-1}{\kappa^2+1}\Big)^{ \rho(i,j)/(4m)-1}
               & {\rm if} \ j\ne i
    \end{array}\right.
  \end{equation}
  hold for vertices $j\in V$ and  $i$ in  the appropriate index sets being either ${\DlR}$ or $\Gl$.
\end{lemma}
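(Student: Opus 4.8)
The plan is to derive Lemma~\ref{maintheorem1.lem3} by substituting the uniform norm bounds of Lemma~\ref{maintheorem1.lem2} into the general off-diagonal decay estimate of Lemma~\ref{maintheorem1.lem1}. The first step is to check that for all $x,y,x',y'\in{\mathbb R}^N$ and $\lambda\in\Lambda$ the matrices ${\bf K}_{1,\lambda}(x,y)$ and ${\bf K}_{2,\lambda}(x',y')$ have geodesic width at most $2m$: this is immediate from $\omega({\bf J}(x,y))\le 2m$ (Assumption~\ref{assump:J}) and $\omega({\bf A})\le 2m$ (by~\eqref{assump:A2}), since applying the selection maps $\chi_{_{\DlR}},\chi_{_{\Gl}}$ and their adjoints cannot enlarge the geodesic width and the zero block has width $0$. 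Here ${\bf K}_{1,\lambda}$ is regarded as a matrix with rows and columns indexed by the (possibly overlapping) union $\DlR\sqcup\Gl$, and ${\bf K}_{2,\lambda}$ as one with rows indexed by $\DlR\sqcup\Gl$ and columns by $V$, all distances being inherited from the geodesic distance $\rho$ on $\cG$; the proof of Lemma~\ref{maintheorem1.lem1} transfers verbatim to this setting, since it uses only the triangle inequality for $\rho$, the positive definiteness of ${\bf K}_{1,\lambda}^*{\bf K}_{1,\lambda}$, and the associated Neumann series, and the invertibility of ${\bf K}_{1,\lambda}$ needed here is part of Lemma~\ref{maintheorem1.lem2}.

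The second step records the consequences of Lemma~\ref{maintheorem1.lem2}: $\|{\bf K}_{1,\lambda}(x,y)\|\le L_1+\|{\bf A}\|$, $\|{\bf K}_{2,\lambda}(x',y')\|\le L_1+\|{\bf A}\|$, and, by~\eqref{maintheorem1.lem2.eq4} together with the definition~\eqref{kappa.def} of $\kappa$, $\|({\bf K}_{1,\lambda}(x,y))^{-1}\|\le \kappa/(L_1+\|{\bf A}\|)$; hence the condition number satisfies $\kappa({\bf K}_{1,\lambda}(x,y))=\|{\bf K}_{1,\lambda}(x,y)\|\,\|({\bf K}_{1,\lambda}(x,y))^{-1}\|\le\kappa$. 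Applying Lemma~\ref{maintheorem1.lem1} with $\omega=2m$, ${\bf H}_1={\bf K}_{1,\lambda}(x,y)$, ${\bf H}_2={\bf K}_{2,\lambda}(x',y')$ then gives, for distinct $i,j$ and $l,l'\in\{1,2\}$,
\[
  |g_{ll'}(i,j)|\le \|({\bf K}_{1,\lambda})^{-1}\|^2\|{\bf K}_{1,\lambda}\|\,\Big(\frac{\kappa({\bf K}_{1,\lambda})^2-1}{\kappa({\bf K}_{1,\lambda})^2+1}\Big)^{\rho(i,j)/(4m)-1/2},
\]
the analogous bound for $(l,l')\in\{(1,3),(2,3)\}$ with an extra factor $\|{\bf K}_{2,\lambda}\|$ on the prefactor and exponent $\rho(i,j)/(4m)-1$, and the corresponding bounds at $j=i$ (the prefactor without the decay factor).

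The third step eliminates the dependence on $\kappa({\bf K}_{1,\lambda})$, $\|{\bf K}_{1,\lambda}\|$ and $\|{\bf K}_{2,\lambda}\|$. For the prefactors, $\|({\bf K}_{1,\lambda})^{-1}\|^2\|{\bf K}_{1,\lambda}\|\le \kappa^2/(L_1+\|{\bf A}\|)$ and $\|({\bf K}_{1,\lambda})^{-1}\|^2\|{\bf K}_{1,\lambda}\|\,\|{\bf K}_{2,\lambda}\|\le\kappa^2$, which are exactly the prefactors in~\eqref{maintheorem1.lem3.eq1} and~\eqref{maintheorem1.lem3.eq2}. For the decay factor I split on the sign of the exponent: when it is nonnegative I use that $t\mapsto(t-1)/(t+1)$ is nondecreasing on $[1,\infty)$ and $1\le\kappa({\bf K}_{1,\lambda})\le\kappa$ to replace the base by $(\kappa^2-1)/(\kappa^2+1)$ without reversing the inequality; when it is negative the base $(\kappa^2-1)/(\kappa^2+1)$ lies in $[0,1)$, so the claimed right-hand side is at least $\kappa^2/(L_1+\|{\bf A}\|)$ (respectively $\kappa^2$), which already dominates $|g_{ll'}(i,j)|$ because an entry is bounded by the operator norm and $\|({\bf K}_{1,\lambda})^{-1}\|\le\kappa/(L_1+\|{\bf A}\|)\le\kappa^2/(L_1+\|{\bf A}\|)$ (respectively $\|({\bf K}_{1,\lambda})^{-1}{\bf K}_{2,\lambda}\|\le\kappa\le\kappa^2$, since $\kappa\ge1$). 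Assembling these cases yields~\eqref{maintheorem1.lem3.eq1} and~\eqref{maintheorem1.lem3.eq2}. I expect the main obstacle to be the first step—making precise that the off-diagonal decay lemma genuinely applies to the subset-indexed saddle-point matrices ${\bf K}_{1,\lambda}$ and ${\bf K}_{2,\lambda}$—together with the low-$\rho(i,j)$ bookkeeping just described; the remainder is direct substitution.
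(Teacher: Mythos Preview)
Your proposal is correct and takes essentially the same approach as the paper: apply Lemma~\ref{maintheorem1.lem1} with $\omega=2m$ and feed in the norm bounds of Lemma~\ref{maintheorem1.lem2} to control the condition number by $\kappa$. The only cosmetic difference is that the paper first embeds ${\bf K}_{1,\lambda}$ and ${\bf K}_{2,\lambda}$ into $V$-indexed block matrices ${\bf H}_{1,\lambda},{\bf H}_{2,\lambda}$ by padding with $\mu\,{\bf I}$ (for a suitable $\mu$) and zeros on the complement of $D_{\lambda,R}\sqcup W_{\lambda,R}$, so that Lemma~\ref{maintheorem1.lem1} applies verbatim rather than via the ``transfers to subset-indexed matrices'' argument you invoke; your explicit handling of the case where the exponent $\rho(i,j)/(4m)-1/2$ (or $-1$) is negative is a bookkeeping detail the paper leaves implicit.
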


\begin{proof}
  Take  $\mu\in [(c_1/(c_1+\|{\bf A}\|))^2 \min ( c_1, c_2^2/L_1),
    L_1+\|{\bf A}\|]$,
  define
  \begin{equation} \label{maintheorem1.lem3.pf.eq1}
    {\bf H}_{1, \lambda}=
   \left[ \begin{matrix} \mu {\bf I}_{V\backslash (W_{\lambda, R}\cup D_{\lambda,R})} & {\bf 0}                   \\
                {\bf 0}                                                      & {\bf K}_{1, \lambda}(x,y)
    \end{matrix}\right]= \big(h_{1, \lambda}(i,j)\big)_{i, j\in V}.
  \end{equation}
  By \eqref{maintheorem1.lem3.pf.eq1} and Lemma \eqref{maintheorem1.lem2}, we have
  \begin{equation}\label {maintheorem1.lem3.eq3}
    \|{\bf H}_{1, \lambda}\|\le  L_1+  \|{\bf A}\|, \ \
    \|({\bf H}_{1, \lambda})^{-1}\|\le   \Big(\frac{c_1+\|{\bf A}\|}{c_1}\Big)^2 \max\Big( \frac{1}{c_1}, \frac{L_1}{c_2^2}\Big)
  \end{equation}
  and
  \begin{equation} \label {maintheorem1.lem3.eq4}
    ({\bf H}_{1, \lambda})^{-1}=
    \left[\begin{matrix} \mu^{-1} {\bf I}_{V\backslash (W_{\lambda, R}\cup D_{\lambda,R})} & {\bf 0}                           \\
                {\bf 0}                                                           & ({\bf K}_{1, \lambda}(x, y))^{-1}
    \end{matrix}\right].
  \end{equation}
  Reorganizing the index set in the definition of the  matrix ${\bf H}_{1, \lambda}$, we can consider that
  ${\bf H}_{1, \lambda}$
  consists of blocks $h_{1, \lambda}(i,j)$
  and it has geodesic width $2m$ by Assumptions
  \eqref{assump:J} and \eqref{assump:A}.
  This together with \eqref{maintheorem1.lem3.eq3}, \eqref{maintheorem1.lem3.eq4}
  and Lemma \ref{maintheorem1.lem1} proves \eqref{maintheorem1.lem3.eq1}.

  Define
  \begin{equation*}
    {\bf H}_{2, \lambda}=
    \left[\begin{matrix}  {\bf 0} \\
      {\bf K}_{\lambda}^2(x', y')
    \end{matrix}\right]= \big(h_{2, \lambda}(i,j)\big)_{i, j\in V}.
  \end{equation*}
  Then
  \begin{equation} \label {maintheorem1.lem3.eq5}
    \|{\bf H}_{2, \lambda}\|\le  L_1+  \|{\bf A}\|
    \ \ {\rm and} \ \  ({\bf H}_{1,  \lambda})^{-1} {\bf H}_{2, \lambda}= \
   \left[\begin{matrix}  {\bf 0} \\
      ({\bf K}_{1,\lambda}(x, y))^{-1} {\bf K}_{2, \lambda}(x', y')
    \end{matrix}\right].
  \end{equation}
  Reorganizing the index set in the definition of the  matrix ${\bf H}_{2, \lambda}$, we can consider that
  ${\bf H}_{2, \lambda}$
  consists of blocks $h_{2, \lambda}(i,j)\in {\mathbb R}$
  and it has geodesic width $2m$ by Assumptions
  \eqref{assump:J} and \eqref{assump:A}.
  This together with \eqref{maintheorem1.lem3.eq3}, \eqref{maintheorem1.lem3.eq4}, \eqref{maintheorem1.lem3.eq5}
  and Lemma \ref{maintheorem1.lem1} proves \eqref{maintheorem1.lem3.eq2}.
\end{proof}

Now we are ready to  prove Theorem
\ref{maintheorem1.thm}.

\begin{proof} [Proof of Theorem \ref{maintheorem1.thm}]
  Let $\widetilde S$  and  $\widetilde S_{\lambda, R}$ be the cardinalities of the sets $W$ and $W_{\lambda, R}$ respectively.
  Set ${\bf x}_{\lambda, R}^*=\chi_{D_{\lambda, R}}^*
    \chi_{_{D_{\lambda, R}}}   {\bf x}^*$ and ${\bf x}_{\lambda, R}^{(n)}=\chi_{D_{\lambda, R}}^*
    \chi_{_{D_{\lambda, R}}}   {\bf x}^{(n)}, n\ge 0$.
  By the Karush–Kuhn–Tucker conditions for the constrained optimization problem \eqref{convexoptimizationlinearconstraint.def}, we can find a multiplier ${{\bf v}}^*=[{\bf v}_i]_{i\in W}\in {\mathbb R}^{\widetilde S}$ such that
  \begin{equation}\label{maintheorem1.pf.eq1}
    \nabla F({\bf x}^*)+{\bf A}^T {\bf v}^* = {\bf 0} \  \ {\rm and} \  \
    {\bf A} {\bf x}^* = {\bf b}.
  \end{equation}
  Similarly
  by the Karush–Kuhn–Tucker conditions for the local linearly constrained optimization
  \eqref{DACwithoutinequality.defb},
  there exist multipliers
  ${\bf v}_\lambda^{(n)}=[{\bf v}_{i}^{(n)}]_{i\in W_{\lambda, R}}\in {\mathbb R}^{\widetilde S_{\lambda, R}}, \lambda\in \Lambda$
  such that
  \begin{subequations}\label{maintheorem1.pf.eq2}
    \begin{equation}
      \label{maintheorem1.pf.eq2a}
      \chi_{_{\DlR}} \nabla F\big(\chi^*_{_{\DlR}} {\bf w}_\lambda^{(n)} -{\bf x}_{\lambda, R}^{(n)}+{\bf x}^{(n)}
      \big)+\chi_{_{\DlR}}{\bf A}^T\chi^*_{_{W_{\lambda, R}}} {\bf v}^{(n)}_{\lambda} = {\bf 0} \end{equation}
    and
    \begin{equation}
      \label{maintheorem1.pf.eq2b}
      \chi_{_{W_{\lambda, R}}} {\bf A} \big(\chi^*_{_{\DlR}} {\bf w}_\lambda^{(n)} -{\bf x}_{\lambda, R}^{(n)}+{\bf x}^{(n)}
      \big) = \chi_{_{W_{\lambda, R}}} {\bf b}.
    \end{equation}
  \end{subequations}
  Combining
  \eqref{maintheorem1.pf.eq1} and \eqref{maintheorem1.pf.eq2} yields
  \begin{equation}\label{maintheorem1.pf.eq3}
    \chi_{_{\DlR}}\Big[\nabla F\big(\chi^*_{_{\DlR}} {\bf w}_\lambda^{(n)} -{\bf x}_{\lambda, R}^{(n)}+{\bf x}^{(n)}
    \big) - \nabla F({\bf x}^*) \Big] 
    =\chi_{_{\DlR}} {\bf A}^T ({\bf v}^* - \chi^*_{_{\Gl}} {\bf v}^{(n)}_{\lambda} )  
  \end{equation}
  and
  \begin{equation}  \label{maintheorem1.pf.eq3+}
    \chi_{_{W_{\lambda, R}}} {\bf A}\big(\chi^*_{_{\DlR}} {\bf w}_\lambda^{(n)}-{\bf x}^*)= -\chi_{_{W_{\lambda, R}}} {\bf A}
    (-{\bf x}_{\lambda, R}^{(n)}+{\bf x}^{(n)}),  
    \ \lambda\in \Lambda.
  \end{equation}

  Define
  $${\bf J}_1\big({\bf w}_\lambda^{(n)} , {\bf x}^{(n)}\big) = {\bf J}\big(\chi^*_{_{\DlR}} {\bf w}_\lambda^{(n)}
    -{\bf x}_{\lambda, R}^{(n)}+{\bf x}^{(n)}, 
    \ {\bf x}_{\lambda, R}^*   -{\bf x}_{\lambda, R}^{(n)}+{\bf x}^{(n)} \big)$$
  and
  $${\bf J}_2({\bf x}^{(n)})={\bf J}\big( {\bf x}_{\lambda, R}^*
    + \chi_{V\backslash D_{\lambda, R}}^*
    \chi_{V\backslash D_{\lambda, R}}  {\bf x}^{(n)}, {\bf x}^*\big),$$
  where ${\bf x}_{\lambda, R}^*=\chi_{D_{\lambda, R}}^*
    \chi_{D_{\lambda, R}}   {\bf x}^*$ and ${\bf J}({\bf x}, {\bf y})$ is given  in
  \eqref{eq:gradient_difference}.
  Then
  \begin{eqnarray} \label{maintheorem1.pf.eq4}
    \hskip-0.08in& \hskip-0.08in & \hskip-0.08in \nabla F(\chi^*_{_{\DlR}} {\bf w}_\lambda^{(n)} + \chi_{V\backslash D_{\lambda, R}}^*
    \chi_{V\backslash D_{\lambda, R}}  {\bf x}^{(n)}) - \nabla F({\bf x}_{\lambda, R}^*
    + \chi_{V\backslash D_{\lambda, R}}^*
    \chi_{V\backslash D_{\lambda, R}}  {\bf x}^{(n)})\nonumber \\
    \hskip-0.08in& \hskip-0.08in  & \hskip-0.08in = {\bf J}_1\big({\bf w}_\lambda^{(n)} , {\bf x}^{(n)}\big) \chi^*_{_{\DlR}} \big({\bf w}_\lambda^{(n)}  - \chi_{_{\DlR}} {\bf x}^*\big)
  \end{eqnarray}
  and
  \begin{equation} \label{maintheorem1.pf.eq5}
    \nabla F({\bf x}_{\lambda, R}^*
    + \chi_{V\backslash D_{\lambda, R}}^*
    \chi_{V\backslash D_{\lambda, R}}  {\bf x}^{(n)})  - \nabla F({\bf x}^*) = {\bf J}_2({\bf x}^{(n)}) {\bf I}_{\DlRc} ({\bf x}^{(n)} -{\bf x}^*)
  \end{equation}
  by Assumption  \ref{assump:J}.
  Substituting the expressions in \eqref{maintheorem1.pf.eq4} and
  \eqref{maintheorem1.pf.eq5}
  into \eqref{maintheorem1.pf.eq3}, we obtain
  \begin{eqnarray}  \label{maintheorem1.pf.eq6}
    \hskip-0.08in  & \hskip-0.08in & \hskip-0.08in  \chi_{_{\DlR}} {\bf J}_1\big({\bf w}_\lambda^{(n)} , {\bf x}^{(n)}\big) \chi^*_{_{\DlR}}\big ({\bf w}_\lambda^{(n)}  - \chi_{_{\DlR}} {\bf x}^*\big) - \chi_{_{\DlR}} {\bf A}^T \big({\bf v}^* - \chi^*_{_{\Gl}} {\bf v}^{(n)}_{\lambda}\big )\nonumber\\
    & \hskip-0.08in  = & \hskip-0.08in  - \chi_{_{\DlR}} {\bf J}_2({\bf x}^{(n)}) \chi_{V\backslash D_{\lambda, R}}^*
    \chi_{_{V\backslash D_{\lambda, R}}} ({\bf x}^{(n)} -{\bf x}^*).
  \end{eqnarray}
  By the definition of the set $\Gl$ in \eqref{eq:Gamma_lambda}, we have
  \begin{equation} \label{maintheorem1.pf.eq7} \chi_{_{V\backslash {W_{\lambda, R}}}}{\bf A}\chi^*_{_{\DlR}}=0,\end{equation}
  which implies that
  $\chi_{\DlR} {\bf A}^T {\bf v}^* =  {\bf A}_\lambda^T  \chi_{_{W_{\lambda, R}}}{\bf v}^*$,
  where  ${\bf A}_{\lambda} = \chi_{_{\Gl}} {\bf A}\chi^*_{\DlR}$.
  This together with \eqref{maintheorem1.pf.eq6} yields 
  \begin{eqnarray} \label{maintheorem1.pf.eq8}
    & & \hskip-0.08in {\bf B}^{(n)}_{\lambda} ({\bf w}_\lambda^{(n)}  - \chi_{_{\DlR}} {\bf x}^*) + {\bf A}_{\lambda}^T ({\bf v}^{(n)}_{\lambda} - \chi_{_{\Gl}} {\bf v}^*)\nonumber\\
    &  = & \hskip-0.08in  {\bf G}^{(n)}_{\lambda}
    \chi_{_{V\backslash D_{\lambda, R}}}^*  \chi_{_{V\backslash D_{\lambda, R}}}
    ({\bf x}^{(n)} -{\bf x}^*),
  \end{eqnarray}
  where
  $\mB^{(n)}_{\lambda} = \chi_{_{\DlR}}{\bf J}_1\big({\bf w}_\lambda^{(n)} , {\bf x}^{(n)}\big) \chi^*_{_{\DlR}}$
  and
  ${\bf G}^{(n)}_\lambda = - \chi_{_{\DlR}} {\bf J}_2({\bf x}^{(n)})$.

  Set $\widetilde {\bf A}_{\lambda} = - \chi_{_{\Gl}} {\bf A} $.
  By \eqref{maintheorem1.pf.eq1}, \eqref{maintheorem1.pf.eq2b} and \eqref{maintheorem1.pf.eq7},  we have
  \begin{eqnarray} \label{maintheorem1.pf.eq10}
    {\bf A}_{\lambda} ({\bf w}_\lambda^{(n)}  - \chi_{_{\DlR}} {\bf x}^*) & \hskip-0.08in  = & \hskip-0.08in  \chi_{_{\Gl}}{\bf A} (\chi^*_{_{\DlR}} {\bf w}_\lambda^{(n)} -{\bf x}^*)+
    \chi_{_{\Gl}}{\bf A} \chi_{_{V\backslash D_{\lambda, R}}}^*  \chi_{_{V\backslash D_{\lambda, R}}}  {\bf x}^*\nonumber \\
    & \hskip-0.08in = & \hskip-0.08in
    \widetilde {\bf A}_{\lambda} \chi_{_{V\backslash D_{\lambda, R}}}^*  \chi_{_{V\backslash D_{\lambda, R}}}  ({\bf x}^{(n)} - {\bf x}^*).
  \end{eqnarray}
  Combining \eqref{maintheorem1.pf.eq8}  and \eqref{maintheorem1.pf.eq10} yields the following crucial linear system,
  \begin{equation} \label{maintheorem1.pf.eq10}
  \left[  \begin{matrix}
      {\bf B}^{(n)}_{\lambda} & {\bf A}^T_{\lambda} \\
      {\bf A}_{\lambda}       & {\bf 0}
    \end{matrix}\right]
    \left[\begin{matrix}
      {\bf w}_\lambda^{(n)}  - \chi_{_{\DlR}} {\bf x}^* \\
      {\bf v}^{(n)}_{\lambda} - \chi_{_{\Gl}} {\bf v}^*
    \end{matrix}\right]
    =
    \left[\begin{matrix}
      \mG^{(n)}_{\lambda} \\
      \widetilde {\bf A}_{\lambda}
    \end{matrix} \right]{\bf I}_{\DlRc}
    ({\bf x}^{(n)} - {\bf x}^*).
  \end{equation}

  By Lemma \ref{maintheorem1.lem2}, we have
  \begin{equation} \label{maintheorem1.pf.eq11}
 \left\| \left[  \begin{matrix}
      {\bf B}^{(n)}_{\lambda} & {\bf A}^T_{\lambda} \\
      {\bf A}_{\lambda}       & {\bf 0}
    \end{matrix}\right]
\right\|\le L_1+\|{\bf A}\|,
  \end{equation}
  \begin{equation}
    \left\|\left[\begin{matrix}
      {\bf B}^{(n)}_{\lambda} & {\bf A}^T_{\lambda} \\
      {\bf A}_{\lambda}       & {\bf 0}
    \end{matrix}\right]^{-1}\right\|
    \le
    \Big(\frac{c_1+\|{\bf A}\|}{c_1}\Big)^2 \max\Big( \frac{1}{c_1}, \frac{L_1}{c_2^2}\Big),
  \end{equation}
  and
  \begin{equation} \label{maintheorem1.pf.eq12}
    \left\|\left[\begin{matrix}
      \mG^{(n)}_{\lambda} \\
      \widetilde {\bf A}_{\lambda}
    \end{matrix}\right]\right\|\le  L_1+  \|{\bf A}\|.
  \end{equation}
  Therefore by Lemma \ref{maintheorem1.lem3}, there exists a matrix
  ${\bf H}_\lambda^{(n)}=[h^{(n)}_{\lambda}(i,j)]_{i\in {\DlR}, j\in V}$ such that
  \begin{equation}  \label{maintheorem1.pf.eq13}
    {\bf w}_\lambda^{(n)}  - \chi_{\DlR} {\bf x}^* = {\bf H}_{\lambda}^{(n)} \chi_{_{V\backslash D_{\lambda, R}}}^*  \chi_{_{V\backslash D_{\lambda, R}}}   ({\bf x}^{(n)} - {\bf x}^*)
  \end{equation}
  and
  \begin{equation}  \label{maintheorem1.pf.eq14}
    |h^{(n)}_{\lambda}(i,j)|\le
    \left\{\begin{array}{ll}
      \kappa^2 & { \rm if}\  j=i   \\
      \kappa^2
      \Big(\frac{\kappa^2-1}{\kappa^2+1}\Big)^{ \rho(i,j)/(4m)-1}
               & {\rm if} \ j\ne i
    \end{array}\right.
  \end{equation}
  where $i\in {\DlR}, j\in V$.

  Define
  \begin{equation}
    {\bf H}^{(n)} = \sum_{\lambda\in \Lambda}\chi_{_{D_{\lambda}}}^*  \chi_{_{D_{\lambda}}}  \chi^*_{\DlR}{\bf H}_{\lambda}^{(n)} = [h^{(n)}(i,j)]_{i,j\in V}.
  \end{equation}
  By \eqref{extendedneighbor.def}, \eqref{DACwithoutinequality.defb}, \eqref{maintheorem1.pf.eq13}, \eqref{maintheorem1.pf.eq14}
  and non-overlapping covering property of $\Dl, \lambda\in \Lambda$, we obtain
  \begin{equation} \label{maintheorem1.pf.eq15}
    {\bf x}^{(n+1)} - {\bf x}^*  
    = {\bf H}^{(n)} ({\bf x}^{(n)} - {\bf x}^*),
  \end{equation}
  and
  \begin{equation}  \label{maintheorem1.pf.eq16}
    |h^{(n)}(i,j)|\le
    \left\{\begin{array}{ll}
      0 & { \rm if}\  \rho(j,i)\le R \\
      \kappa^2
      \Big(\frac{\kappa^2-1}{\kappa^2+1}\Big)^{ \rho(i,j)/(4m)-1}
        & {\rm if} \ \rho(j,i)> R
    \end{array}\right.
  \end{equation}
  where $i,j\in V$.
  Therefore following the same argument used in \cite{Emirov2022}, we have
  \begin{align*}
    \|{\bf x}^{(n+1)} - {\bf x}^*\|_p \le \delta_R
    \|{\bf x}^{(n)} - {\bf x}^*\|_p, \ n\ge 0.
  \end{align*}
  The desired  estimate in \eqref{maintheorem1.thm.eq4} follows from applying the above inequality iteratively.
\end{proof}

\subsection{Proof of Theorem \ref{inexactmaintheorem1.thm}}
\label{inaxactmaintheorem.thm.pfsection} 
Let 
$\widetilde {\bf w}_\lambda^{(n)}$ and $\widetilde {\bf x}^{(n)}$
be as in \eqref{eq:tldxn+1}, and set
   $$\widetilde{\bf K}_{\lambda}^{(n)} = \left[\begin{matrix}                   \widetilde{\mB}^{(n)}_{\lambda} & {\bf A}^T_{\lambda} \\
{\bf A}_{\lambda}   & \mzero                                    \end{matrix}\right], \  \lambda\in \Lambda, $$
where 
 ${\bf A}_{\lambda} = \chi_{_{\Gl}} {\bf A}\chi^*_{\DlR}$ and
  $  \widetilde{\mB}^{(n)}_{\lambda} =\chi_{\DlR} {\bf J}\big(\chi^*_{\DlR} \widetilde{{\bf w}}_\lambda^{(n)} + \mI_{\DlRc} \widetilde{{\bf x}}^{(n)}, \mI_{\DlR} {\bf x}^* + \mI_{\DlRc} \widetilde{{\bf x}}^{(n)}\big) \chi^*_{\DlR}$.  
By Lemma \ref{maintheorem1.lem2}, we have
\begin{equation} \label{inexactmaintheorem1.pf.eq11}
 \big \| \widetilde{\bf K}_{\lambda}^{(n)}
\big\|\le L_1+\|{\bf A}\| \ \ {\rm and}\ \
   \big\| \big(\widetilde{\bf K}_{\lambda}^{(n)}\big)^{-1}
\big\|\le 
  \Big(\frac{c_1+\|{\bf A}\|}{c_1}\Big)^2 \max\Big( \frac{1}{c_1}, \frac{L_1}{c_2^2}\Big).
\end{equation}

 Write   $$\big(\widetilde{\bf K}_{\lambda}^{(n)}\big)^{-1} =  \left[\Big[\big(\widetilde{\bf K}_{\lambda}^{(n)}\big)^{-1}\Big]_{ij} \right]_{1\le i, j\le 2} , \  \lambda\in \Lambda$$
   and define 
$$\widetilde{\bf H}^{(n)} = \sum_{\lambda\in \Lambda} \mI_{\Dl} \chi^*_{\DlR} \left(\left[\Big(\widetilde{\bf K}_{\lambda}^{(n)}\Big)^{-1}\right]_{11}\widetilde{\bf G}_{\lambda}^{(n)} +  \left[\Big(\widetilde{\bf K}_{\lambda}^{(n)}\Big)^{-1}\right]_{12}\widetilde{\bf A}_{\lambda}\right),$$
where
 $\widetilde {\bf A}_{\lambda} = - \chi_{_{\Gl}} {\bf A} $ and
$\widetilde{\bf G}^{(n)}_\lambda = - \chi_{\DlR} {\bf J}(\chi^*_{\DlR} \widetilde{{\bf w}}_\lambda^{(n)} + \mI_{\DlRc} \widetilde{{\bf x}}^{(n)}, {\bf x}^*)  \mI_{\DlRc}$.
Following the argument used to establish \eqref{maintheorem1.pf.eq16}, we obtain
$$
  \|\widetilde{\bf H}^{(n)}\|_{\cS} \le \delta_R
$$
and 
\begin{align*}
\left|\left[\big(\widetilde{\bf K}_{\lambda}^{(n)}\big)^{-1}\right]_{i,j}(k,l)\right| \le \frac{1}{\sqrt{L}}  \left(1 - c \right)^{\frac{\rho(k,l)}{4m}-1}
\ \ {\rm for \ all} \ \ 1\le i, j\le 2 \ \ {\rm and }\ k,l\in V.
\end{align*}
Here $\|{\bf B}\|_{\mathcal S}=\max\big(\sup_{i\in V}\sum_{i\in V} |b(i,j)|, \sup_{j\in V}\sum_{i\in V}|b(i,j)|\big)$ is the Schur norm of a matrix ${\bf B}=[b(i,j)]_{i,j\in V}$.

Let ${\pmb \theta}_\lambda^{(n)}$ and
${\pmb \eta}_{\lambda}^{(n)}$ be as in 
\eqref{inexactmaintheorem1.pf.eq2}, and 
${\bf x}^*$  be as in \eqref{maintheorem1.pf.eq1}.
Similar to the derivation of \eqref{maintheorem1.pf.eq15}, we could obtain the following relation between the approximation errors of $\tilde{{\bf x}}^{(n+1)}$ and $\tilde{{\bf x}}^{(n)}$:
\begin{eqnarray*}
  \tilde{{\bf x}}^{(n+1)} - {\bf x}^* &  = &  \widetilde{\bf H}^{(n)} (\tilde{{\bf x}}^{(n)} - {\bf x}^*)\nonumber\\
  & & + \sum_{\lambda\in \Lambda}\mI_{\Dl} \chi^*_{\DlR} \left(\left[\widetilde{\bf K}_{\lambda}^{(n)}\right]^{-1}_{11}\vtheta_{\lambda}^{(n)} +  \left[\widetilde{\bf K}_{\lambda}^{(n)}\right]^{-1}_{12}\veta_{\lambda}^{(n)}\right).
\end{eqnarray*}
Therefore 
\begin{align*}
  \|\tilde{{\bf x}}^{(n+1)} - {\bf x}^*\|_\infty \le \delta_R \|\tilde{{\bf x}}^{(n)} - {\bf x}^*\|_\infty + \frac{2}{\sqrt{L}}\delta_R \epsilon_n, \quad n\ge 0,
\end{align*}
and the desired result in \eqref{inexactmaintheorem1.thm.eq1} follows from applying the above inequality iteratively.

\subsection{Proof of Theorem \ref{thm:logbarrier}}
\label{thm:logbarrier.pfsection}

It follows immediately from Theorem \ref{maintheorem1.thm} by noting that the Hessian of the new objective function \(G\) in the log-barrier model \cref{eq:globalopt_logbarrier} is bounded by \(L_{1}+M_{t}\).

\bibliographystyle{plain}

\bibliography{ConstrainedDistributedOptimizationArXiv}

\end{document}